\newtheoremstyle{Teorema}{5pt}{5pt}{\it}{}{\bf}{.}{ }{}
\theoremstyle{Teorema}
\newtheorem{Theorem}{Theorem}[section]
\newtheorem{Corollary}[Theorem]{Corollary}
\newtheorem{Proposition}[Theorem]{Proposition}
\newtheorem{Definition}[Theorem]{Definition}
\newtheorem{Lemma}[Theorem]{Lemma}
\newtheoremstyle{Annotazione}{5pt}{5pt}{\rm}{}{\it}{.}{ }{}
\theoremstyle{Annotazione}
\newtheorem{Remark}[Theorem]{Remark}
\def\SL{\mathrm{SL}}
\def\GL{\mathrm{GL}}
\def\CC{\mathbb{C}}
\def\bba{\mathbb{A}}
\def\bbz{\mathbb{Z}}
\def\bbc{\mathbb{C}}
\def\bbp{\mathbb{P}}
\def\Hom{\operatorname{Hom}}
\def\Aut{\operatorname{Aut}}
\def\Spec{\operatorname{Spec}}
\def\an{\mathrm{an}}
\newcommand*\rel@kern[1]{\kern#1\dimexpr\macc@kerna}
\newcommand*\widebar[1]{%
  \begingroup
  \def\mathaccent##1##2{%
    \rel@kern{0.8}%
    \overline{\rel@kern{-0.8}\macc@nucleus\rel@kern{0.2}}%
    \rel@kern{-0.2}%
  }%
  \macc@depth\@ne
  \let\math@bgroup\@empty \let\math@egroup\macc@set@skewchar
  \mathsurround\z@ \frozen@everymath{\mathgroup\macc@group\relax}%
  \macc@set@skewchar\relax
  \let\mathaccentV\macc@nested@a
  \macc@nested@a\relax111{#1}%
  \endgroup
}
\begin{document}

\title[Borel hyperbolicity]{Algebraicity of   analytic maps to a hyperbolic variety}
\author{Ariyan Javanpeykar}
\address{Ariyan Javanpeykar \\
Institut f\"{u}r Mathematik\\
Johannes Gutenberg-Universit\"{a}t Mainz\\
Staudingerweg 9, 55099 Mainz\\
Germany.}
\email{peykar@uni-mainz.de}

\author{Robert A. Kucharczyk}
\address{Robert A. Kucharczyk \\ Bonn,
 Germany.}
\email{robert.a.kucharczyk@gmail.com}

\subjclass[2010]
{32Q45 %Hyperbolic and Kobayashi hyperbolic manifolds
(32C99, %Several complex variables and analytic spaces
11J99)}  %Number Theory

\keywords{Hyperbolicity,  algebraization theorems, transcendental specialization, Osgood-Hartogs-type properties}

\begin{abstract} Let $X$ be an algebraic variety over $\CC$. We say that $X$ is Borel hyperbolic if, for every finite type reduced scheme $S$ over $\CC$, every holomorphic map $S^{\an}\to X^{\an}$ is algebraic. We use a transcendental specialization technique to prove that $X$ is Borel hyperbolic if and only if, for every smooth affine curve $C$ over $\CC$, every holomorphic map $C^{\an}\to X^{\an}$ is algebraic. We use the latter result to prove that Borel hyperbolicity shares many common features with other notions of hyperbolicity such as Kobayashi hyperbolicity.
\end{abstract}

\maketitle

\thispagestyle{empty}

\section{Introduction}  
In this paper we  study the   algebraicity of holomorphic maps into  a fixed variety.   If $X$ is a locally finite type scheme over $\CC$, let $X^{\an}$ be the associated complex-analytic space \cite[Expos\'e~XII]{SGA1}. If  $X$ and $Y$ are finite type schemes over $\mathbb{C}$ and $\varphi\colon X^{\an}\to Y^{\an}$ is a holomorphic map, then we say that $\varphi$ is \emph{algebraic} if there is a morphism of $\mathbb{C}$-schemes $f\colon X\to Y$ such that $f^{\an}  =\varphi$.

\begin{Definition}\label{def:borel_hyp}
A finite type scheme over $\CC$ is \emph{Borel hyperbolic} if, for every finite type reduced scheme $S$ over $\CC$,  any holomorphic map $S^{\an}\to X^{\an}$ is algebraic.
\end{Definition}

To motivate this specific terminology, we first explain why we choose to refer to varieties $X$ with the above property as  ``Borel hyperbolic''. For instance, 
the relations between Borel hyperbolicity and ``hyperbolicity'' in the usual sense are captured by the following theorem due to Brody,   Kobayashi, and Kwack.   Note that we recall the basic definitions from  hyperbolic geometry in Section \ref{section:proper_case}.

\begin{Theorem}[Why hyperbolic?]\label{thm:why_hyp} The following statements hold.
\begin{enumerate}
\item  If $X$ is a Borel finite type scheme over $\CC$, then $X$ is Brody hyperbolic. 
\item If $X$ is a one-dimensional finite type separated scheme, then $X$ is Brody hyperbolic if and only if $X$ is Borel hyperbolic.
\item If $X$ is a proper scheme over $\CC$, then $X$ is Brody hyperbolic if and only if $X$ is Borel hyperbolic.  
\item If there is a proper scheme $Y$ over $\CC$ and an open immersion $X\subset Y$  such that $X^{\an}$ is hyperbolically embedded in $Y^{\an}$, then $X$ is Borel hyperbolic.  
\end{enumerate} 
\end{Theorem}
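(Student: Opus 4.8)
\emph{Overview.} I regard item (iv) as the engine of the theorem: once it is in place, the hyperbolic-geometry content of (ii) and (iii) reduces to classical facts about when a variety is hyperbolically embedded into a compactification, and the implication Borel $\Rightarrow$ Brody in (ii)--(iii) is just (i). So I treat (i) and (iv) first.

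\emph{Part (i).} Let $X$ be Borel hyperbolic and let $f\colon\CC\to X^{\an}$ be holomorphic; I must show $f$ is constant. Consider $g\defined f\circ\exp\colon\CC\to X^{\an}$, a holomorphic map out of $(\bba^1)^{\an}=\CC$; by Borel hyperbolicity it is algebraic, say $g=h^{\an}$ for a morphism $h\colon\bba^1\to X$. Since $g$ is invariant under $z\mapsto z+2\pi i$, the fibre of $h^{\an}$ over $g(0)$ contains the infinite set $2\pi i\bbz$. But a non-constant morphism $h$ from the curve $\bba^1$ has finite fibres, so $h$, hence $g$, is constant. As $\exp$ surjects onto $\CC^{\times}$, the map $f$ is constant on the dense open $\CC^{\times}$ and therefore on all of $\CC$. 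Thus $X$ is Brody hyperbolic.

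\emph{Part (iv).} By the reduction theorem quoted in the abstract, it suffices to show that every holomorphic map $\varphi\colon C^{\an}\to X^{\an}$ from a smooth affine curve $C$ is algebraic. Write $C=\overline C\smallsetminus\{p_1,\dots,p_r\}$ with $\overline C$ its smooth projective model, and compose with the open immersion to view $\varphi$ as a map into $Y^{\an}$. Since $X^{\an}$ is hyperbolically embedded in the compact space $Y^{\an}$, the big Picard theorem of Kwack--Kobayashi applies at each puncture (locally $\varphi$ is a map from a punctured disc into a relatively compact hyperbolically embedded subspace), so $\varphi$ extends to a holomorphic map $\overline\varphi\colon\overline C^{\an}\to Y^{\an}$. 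As $\overline C$ is a projective curve and $Y$ is proper, GAGA --- equivalently, the fact that the graph of $\overline\varphi$ is a closed analytic, hence algebraic, subspace of the proper scheme $\overline C\times_{\CC}Y$ --- shows $\overline\varphi=\overline f^{\,\an}$ for a morphism $\overline f\colon\overline C\to Y$. Restricting, $f\defined\overline f|_C\colon C\to Y$ satisfies $f^{\an}=\varphi$; since $\varphi(C^{\an})\subseteq X^{\an}$ and $X\subseteq Y$ is open, $f$ factors through $X$, so $\varphi=f^{\an}$ with $f\colon C\to X$ a morphism of schemes, as required.

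\emph{Parts (ii) and (iii).} In both, Borel $\Rightarrow$ Brody is (i), so only the converse is at issue, and I deduce it from (iv) by producing a compactification into which $X^{\an}$ is hyperbolically embedded. For (iii), $X$ is already proper, and Brody's theorem identifies Brody and Kobayashi hyperbolicity for the compact $X^{\an}$; a compact Kobayashi hyperbolic space is tautologically hyperbolically embedded in itself, so (iv) with $Y=X$ applies. For (ii), after reducing to the reduced case and lifting $\varphi$ through the normalization (possible since a smooth affine curve is normal and its nonconstant image lies in one component) I reduce to $X$ a smooth Brody hyperbolic curve; such an $X^{\an}$ is a hyperbolic Riemann surface, and I would verify (or cite) that a hyperbolic curve is hyperbolically embedded in its smooth projective compactification $\overline X$, whence (iv) applies with $Y=\overline X$. \emph{Main obstacle.} The genuinely analytic input is the extension step in (iv): the Kwack--Kobayashi theorem, which is precisely where the hyperbolic-embedding hypothesis is used; the remaining delicate points are bookkeeping --- checking $X^{\an}$ really inherits the needed hyperbolic embedding for the $Y$ chosen in (ii)--(iii), that the extended map is allowed to take values in $Y$ while the restriction $\overline f|_C$ still factors through the open subscheme $X$ at the scheme level and not merely analytically.
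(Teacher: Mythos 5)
Your proposal is correct and follows essentially the same route as the paper: (i) via the $f\circ\exp$ trick with infinite fibres, (iv) as the engine via the test-on-curves theorem combined with the Kwack--Kobayashi extension theorem (which the paper packages as the ``$\Delta^\ast$-extension property'') and GAGA for proper schemes, and (ii)--(iii) by reduction to (iv) through Kobayashi hyperbolicity and hyperbolic embeddability. The only minor divergence is in (ii), where you lift through the normalization to reduce to a smooth hyperbolic Riemann surface, whereas the paper works with the (possibly singular) reduced curve directly, observing that the isolated boundary points of a compactification are automatically hyperbolic points.
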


 If $X$ is a finite type separated scheme over $\CC$, a useful string of implications that follows from  Theorem \ref{thm:why_hyp} is  the following:
 \[
 \xymatrix{   &  \textrm{$X$ is Kobayashi hyperbolic} \ar@{=>}[dr]   & \\  \textrm{$X$ is hyperbolically embeddable} \ar@{=>}[ur] \ar@{=>}[dr] &  & \textrm{$X$ is Brody hyperbolic}  \\ & \textrm{$X$ is Borel hyperbolic} \ar@{=>}[ur] &   }
 \]

    The reason we refer to such varieties as \emph{Borel} hyperbolic is because of Borel's theorem on locally symmetric varieties. More precisely, let $X$ be a finite type scheme  such that $X^{\an}$ is a locally symmetric variety, i.e.,  the universal cover of $X^{\an}$ is isomorphic to a bounded symmetric domain $D$ and $\pi_1(X^{\an})$ is an arithmetic (torsion-free) subgroup of $\mathrm{Aut}(D)$.  Then $X$ is a smooth quasi-projective  scheme over $\mathbb{C}$. It follows from a theorem of Borel that $X$ is in fact Borel hyperbolic; see  \cite[Theorem~3.10]{Borel1972} or  \cite[Theorem~5.1]{DeligneK3}.
    
   The aforementioned fact that  a locally symmetric variety is Borel hyperbolic is much harder to prove than the Brody hyperbolicity of $X$.  Indeed, as $\CC$ is a simply connected  topological space, the Brody hyperbolicity of a locally symmetric variety $X$ follows from Liouville's theorem on bounded holomorphic functions, as any holomorphic map  $\CC\to X^{\an}$   factors over a bounded domain.

\subsection{Motivation}   We were first led to investigate Borel hyperbolic varieties by   conjectures  of Green--Griffiths, Lang, and Vojta; see \cite{GrGr, Lang1, Vojta87}. These conjectures predict that several ``notions of hyperbolicity'' are equivalent, and thus should have the same formal properties.  By Theorem \ref{thm:why_hyp}, Borel hyperbolic varieties naturally fit into this conjectural framework. The results we establish in this paper verify predictions made by the aforementioned conjectures; see Theorems \ref{thm2} and \ref{et}.

Another reason for us to study Borel hyperbolicity (as a notion on its own) comes from arithmetic geometry. 
For example, the fact that the  fine moduli space of principally polarized abelian varieties over $\CC$ with level $3$ structure is Borel hyperbolic was first used by Deligne in his proof of the Weil conjectures for K3 surfaces \cite{DeligneK3}. Subsequently, Andr\'e used it in a similar fashion to prove the   Shafarevich conjecture and Tate conjecture for polarized K3 surfaces \cite{Andre}. Borel's theorem is also the starting point for studying the Kuga-Satake construction for polarized K3 surfaces, and its applications to Tate's conjecture \cite{Charles, Madapusi}.  
In light of these results, it seems reasonable to suspect that the Borel hyperbolicity of  ``period domains'', as conjectured by Griffiths, will play a similar important role in arithmetic geometry.

Indeed, in his seminal work on period maps and period domains \cite{GriffithsVarnI, GriffithsVarnIII}, Griffiths conjectured that the image of a ``period map'' is algebraizable and that the (a priori only holomorphic) period map is algebraic; see \cite{GGLR}. Part of Griffiths's conjecture can be formulated as saying that an algebraic variety which admits a quasi-finite period map is Borel hyperbolic. 

Furthermore, the complex algebraic stack of smooth  canonically polarized varieties    is Brody hyperbolic and even Kobayashi hyperbolic; see \cite{Schumacher, ToYeung, VZ}. In light of the aforementioned conjectures, it seems reasonable to suspect that this stack is also Borel hyperbolic.

 \subsection{Basic properties of Borel hyperbolic varieties}
 Motivated by conjectures of Green--Griffiths, Lang, and Vojta,  we investigate several basic properties of Borel hyperbolic varieties. For instance, we first show that Borel hyperbolicity persists over quasi-finite morphisms. Our precise result reads as follows, and should be considered as the ``Borel hyperbolic'' analogue of the similar statement  for Kobayashi hyperbolic varieties \cite[Theorem.~1]{Kwack} (see also\cite{Kobayashi}).
 
 \begin{Theorem}\label{thm2} Let $f\colon X\to Y $ be a quasi-finite morphism of finite type  separated schemes over $\CC$.
 If $Y$ is Borel hyperbolic, then $X$ is Borel hyperbolic.
 \end{Theorem}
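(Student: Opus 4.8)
The plan is to exploit the curve criterion announced in the abstract: $X$ is Borel hyperbolic as soon as every holomorphic map $C^{\an}\to X^{\an}$ from a smooth affine curve $C$ over $\CC$ is algebraic. Testing on connected components, I may assume $C$ is connected, hence integral, and I fix a holomorphic $\varphi\colon C^{\an}\to X^{\an}$. Composing with $f^{\an}$ and using that $Y$ is Borel hyperbolic, the map $f^{\an}\circ\varphi\colon C^{\an}\to Y^{\an}$ is algebraic, say equal to $g^{\an}$ for a morphism $g\colon C\to Y$. I then form the reduced fibre product $Z := (X\times_{Y,g}C)_{\mathrm{red}}$, with projections $p\colon Z\to X$ and $q\colon Z\to C$; being a base change of $f$, the morphism $q$ is quasi-finite and separated. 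Since analytification commutes with fibre products and $C^{\an}$ is reduced, the pair $(\varphi,\id_{C})$—which agrees after projection to $Y^{\an}$—yields a holomorphic section $\psi\colon C^{\an}\to Z^{\an}$ of $q^{\an}$ with $p^{\an}\circ\psi=\varphi$. It therefore suffices to show that $\psi$ is algebraic, since then $\varphi=p^{\an}\circ\psi$ is too.

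The hard part will be exactly this last step: promoting a holomorphic section of the quasi-finite morphism $q$ to an algebraic one. The difficulty is that $q$ is neither proper (so GAGA does not apply) nor a covering map, so I first reduce to a situation where covering-space theory is available. First I would invoke Zariski's main theorem to factor $q=\bar q\circ j$ with $j\colon Z\hookrightarrow\bar Z$ an open immersion and $\bar q\colon\bar Z\to C$ finite; then $\psi$ is a holomorphic section of $\bar q^{\an}$ whose image lies in the open set $Z^{\an}$. Because $C$ is integral, the image $\psi(C^{\an})$ is an irreducible analytic curve and hence lies in $\bar Z_0^{\an}$ for a single irreducible component $\bar Z_0$ of $\bar Z$ (analytic and algebraic irreducible components of a finite type $\CC$-scheme coincide). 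Replacing $\bar Z$ by $\bar Z_0$, I may take $\bar Z$ integral and pass to its normalisation $\nu\colon\widetilde Z\to\bar Z$, a finite morphism from a smooth connected curve with $\widetilde q:=\bar q\circ\nu$ finite. As $\psi(C^{\an})$ is one-dimensional it is not contained in the finite singular locus of $\bar Z$, so the universal property of normalisation lifts $\psi$ to a holomorphic section $\widetilde\psi\colon C^{\an}\to\widetilde Z^{\an}$ of $\widetilde q^{\an}$.

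Finally, $\widetilde q\colon\widetilde Z\to C$ is a finite morphism of smooth connected curves, so away from its finite branch locus $B\subset C$ it is a finite topological covering. The preimage $\widetilde q^{-1}(C\setminus B)$ is a dense open in the irreducible curve $\widetilde Z$, hence connected, and the image of the holomorphic section $\widetilde\psi$ over $C\setminus B$ is open and closed in it; therefore it is everything and $\widetilde\psi$ is a homeomorphism onto it, so $\widetilde q$ has degree one. A finite birational morphism of smooth curves is an isomorphism, so $\sigma:=\widetilde q^{-1}\colon C\to\widetilde Z$ is algebraic and $\widetilde\psi=\sigma^{\an}$. Hence $\psi=\nu^{\an}\circ\widetilde\psi=(\nu\circ\sigma)^{\an}$ is algebraic, and since its image lies in the open subscheme $Z\subset\bar Z$ the morphism $\nu\circ\sigma$ factors through an algebraic section $s\colon C\to Z$ of $q$ with $s^{\an}=\psi$. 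Then $\varphi=(p\circ s)^{\an}$ is algebraic, completing the verification of the curve criterion and hence the proof.
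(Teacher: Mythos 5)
Your proof is correct, and while it opens exactly as the paper does---invoke the curve criterion (Theorem~\ref{thm:test_on_curves}), compose with $f^{\an}$, and use Borel hyperbolicity of $Y$ to algebraize $f^{\an}\circ\varphi$---the way you handle the resulting curve lemma is genuinely different. The paper's Lemma~\ref{lem:1_Borel_after_qf} stays inside hyperbolicity theory: it passes to the image curve $D\subset Y$ of the algebraized composite, picks a dense open Brody hyperbolic $D^0\subset D$, pulls back to $E=D^0\times_Y X$ (a Brody hyperbolic curve, since Brody hyperbolicity ascends along quasi-finite maps), and then invokes the equivalence of Brody and Borel hyperbolicity for curves (Theorem~\ref{thm:why_hyp}(ii)), which itself rests on hyperbolic embeddability, a Kwack-type extension theorem, and GAGA. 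You instead convert $\varphi$ into a holomorphic section $\psi$ of the quasi-finite morphism $q\colon Z=(X\times_{Y,g}C)_{\mathrm{red}}\to C$ and algebraize that section directly: Zariski's main theorem compactifies $q$ to a finite morphism, irreducible components and normalization reduce to a finite morphism $\widetilde{q}\colon\widetilde{Z}\to C$ of smooth connected curves, and covering-space theory (the image of a section of a finite covering over a connected base is open and closed, and the punctured $\widetilde{Z}^{\an}$ stays connected) forces $\deg\widetilde{q}=1$, hence $\widetilde{q}$ is an isomorphism and $\psi$ is algebraic. What your route buys is independence from the Kobayashi--Kwack machinery: the curve case of Theorem~\ref{thm:why_hyp} is never needed, only standard facts about analytification (compatibility with fibre products, reductions and normalizations, Remmert's proper mapping theorem) plus elementary topology. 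What the paper's route buys is brevity given the results it has already established, and it keeps the lemma inside the hyperbolicity framework the paper is building anyway.

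Two small points you should make explicit if you write this up: first, to place $\psi(C^{\an})$ inside a single irreducible component of $\bar{Z}$, note that $\psi$ is proper (being a section of the finite map $\bar{q}^{\an}$) and injective, so by Remmert its image is a closed irreducible analytic subset, and the analytic irreducible components of $\bar{Z}^{\an}$ are the analytifications of the algebraic ones; second, the lift of $\psi$ to $\widetilde{Z}^{\an}$ uses that analytification commutes with normalization together with the analytic lifting criterion (the preimage under $\psi$ of the finite non-normal locus is finite, hence thin in $C^{\an}$). Both are standard, so these are presentational gaps rather than mathematical ones.
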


 Next, we investigate how Borel hyperbolicity behaves with respect to finite \'etale maps. Any notion of ``hyperbolicity'' should descend along a finite \'etale map (see for instance \cite[Theorem.~3.2.8]{Kobayashi}). Our next result verifies this ``descent'' property  for the notion  Borel hyperbolicity.

 \begin{Theorem}\label{et} Let $f\colon X\to Y$ be a finite \'etale morphism of finite type separated schemes over $\CC$. Then $X$ is Borel hyperbolic if and only if $Y$ is Borel hyperbolic. 
 \end{Theorem}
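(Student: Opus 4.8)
The plan is to observe that one implication is free and to establish the other by combining the Riemann existence theorem with faithfully flat descent. First, since a finite \'etale morphism is quasi-finite, Theorem \ref{thm2} immediately gives that $X$ is Borel hyperbolic whenever $Y$ is. It therefore remains to prove the converse, so I would assume that $X$ is Borel hyperbolic and deduce the same for $Y$. Here I use that $f$ is surjective (as is implicit in the statement): the reverse implication genuinely requires this, since over a connected component of $Y$ not met by $f$ the hyperbolicity of $X$ carries no information.

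To prove $Y$ is Borel hyperbolic, I would take a finite type reduced scheme $S$ over $\CC$ together with a holomorphic map $\varphi\colon S^{\an}\to Y^{\an}$, and show $\varphi$ is algebraic. The morphism $f^{\an}\colon X^{\an}\to Y^{\an}$ is a finite covering space, so its pullback $p\colon \widetilde{S}:=S^{\an}\times_{Y^{\an}}X^{\an}\to S^{\an}$ along $\varphi$ is again a finite covering space, equipped with a holomorphic map $g\colon \widetilde{S}\to X^{\an}$ satisfying $f^{\an}\circ g=\varphi\circ p$. The key input is the Riemann existence theorem (SGA1, Expos\'e~XII), which yields a finite \'etale surjective morphism of schemes $q\colon S'\to S$ with $(S')^{\an}=\widetilde{S}$ and $q^{\an}=p$; note that $S'$ is again of finite type and reduced over $\CC$. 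Since $X$ is Borel hyperbolic and $g\colon (S')^{\an}\to X^{\an}$ is holomorphic, $g$ is algebraic, say $g=h^{\an}$ for a morphism $h\colon S'\to X$ of schemes.

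It then remains to descend the algebraic morphism $f\circ h\colon S'\to Y$ along the cover $q$. Writing $\mathrm{pr}_1,\mathrm{pr}_2\colon S'\times_S S'\to S'$ for the two projections, one has $q\circ \mathrm{pr}_1=q\circ \mathrm{pr}_2$, and analytifying the identity $(f\circ h)^{\an}=\varphi\circ q^{\an}$ shows that $(f\circ h)\circ \mathrm{pr}_1$ and $(f\circ h)\circ \mathrm{pr}_2$ have equal analytifications. Because $S'\times_S S'$ is reduced (being \'etale over the reduced scheme $S$) and $Y$ is separated, analytification is faithful here, so these two composites agree as morphisms of schemes; that is, $f\circ h$ carries a descent datum for $q$. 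As $\Hom(-,Y)$ is an fpqc sheaf, there is a unique morphism $\psi\colon S\to Y$ with $\psi\circ q=f\circ h$, and analytifying together with surjectivity of $p=q^{\an}$ gives $\psi^{\an}=\varphi$. Hence $\varphi$ is algebraic.

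The main obstacle is the algebraization of the analytic covering $\widetilde{S}$, which is precisely what the Riemann existence theorem supplies; once $\widetilde{S}$ is known to be the analytification of a finite \'etale $S$-scheme, the Borel hyperbolicity of $X$ and standard faithfully flat descent close the argument. The only subtle point beyond that is the faithfulness of analytification used in the descent step, for which the reducedness of $S'\times_S S'$ and the separatedness of $Y$ are exactly what is needed.
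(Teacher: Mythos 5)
Your proposal is correct and takes essentially the same route as the paper: the direction ($Y$ Borel hyperbolic $\Rightarrow$ $X$ Borel hyperbolic) is exactly the paper's appeal to Theorem \ref{thm2}, and the converse is the paper's Lemma \ref{Lemma:Et_for_Borel}, which likewise pulls the covering $X^{\an}\to Y^{\an}$ back along the test map, algebraizes the resulting finite \'etale cover by Riemann's existence theorem, and then applies the Borel hyperbolicity of $X$. The only differences are points of detail in your favour: you make explicit the final descent step (the paper's ``Therefore, the morphism $Z^{\an}\to Y^{\an}$ algebraizes'') via the descent datum, faithfulness of analytification on the reduced scheme $S'\times_S S'$, and the fpqc sheaf property of $\Hom(-,Y)$, and you correctly flag that surjectivity of $f$ is needed for this direction (without it, $\varnothing\to\mathbb{A}^1$ would be a counterexample), both of which the paper leaves implicit.
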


The main technical result of this paper is the fact that    one can test Borel hyperbolicity on maps from smooth affine algebraic curves.  Our results reads as
 follows. 
\begin{Theorem}[Testing Borel hyperbolicity on maps from  curves]\label{thm:test_on_curves}
Let $X$ be a finite type separated   scheme over $\CC$. Then the following are equivalent.
\begin{enumerate}
\item The finite type separated scheme $X$ is Borel hyperbolic.
\item For every smooth affine connected curve $C$ over $\CC$, every holomorphic map $C^{\an}\to X^{\an}$ is algebraic.
\end{enumerate}
\end{Theorem}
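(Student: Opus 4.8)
The implication $(1)\Rightarrow(2)$ is immediate: a smooth affine connected curve over $\CC$ is in particular a finite type reduced scheme, so Borel hyperbolicity of $X$ specialises to the curve statement $(2)$. The whole content is therefore the converse $(2)\Rightarrow(1)$, and my plan is to reformulate algebraicity of a given holomorphic map $\varphi\colon S^{\an}\to X^{\an}$ as the algebraicity of its graph, and then to establish the latter by slicing $S$ into algebraic curves. Throughout, $S$ is finite type and reduced and $X$ is finite type and separated; by Nagata I fix proper compactifications $S\subset\widebar{S}$ and $X\subset\widebar{X}$ over $\CC$.

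The graph of $\varphi$ is a closed analytic subvariety $\Gamma\subset (S\times X)^{\an}$ projecting isomorphically onto $S^{\an}$ via the first projection. Let $\widebar{\Gamma}$ be the closure of $\Gamma$ inside the proper scheme $(\widebar{S}\times\widebar{X})^{\an}$. The plan rests on the following clean dichotomy: \emph{if} $\widebar{\Gamma}$ is a closed analytic subset of $(\widebar{S}\times\widebar{X})^{\an}$, then GAGA for the proper $\CC$-scheme $\widebar{S}\times\widebar{X}$ shows that $\widebar{\Gamma}=Z^{\an}$ for a reduced closed subscheme $Z\subset\widebar{S}\times\widebar{X}$. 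Intersecting with $S\times X$ yields a reduced closed subscheme whose analytification is $\Gamma$; since the first projection induces an isomorphism $\Gamma\xrightarrow{\ \sim\ }S^{\an}$ and a morphism of finite type $\CC$-schemes is an isomorphism as soon as its analytification is, the first projection $Z\cap(S\times X)\to S$ is an isomorphism, and composing its inverse with the second projection produces an algebraic morphism $f\colon S\to X$ with $f^{\an}=\varphi$. Everything thus reduces to extending the analytic set $\Gamma$ across the boundary $(\widebar{S}\times\widebar{X})\setminus(S\times X)$.

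This extension is where hypothesis $(2)$ enters and is the main obstacle. The idea is that $S$ is covered by smooth affine algebraic curves $C$: for each such $C$, hypothesis $(2)$ makes $\varphi|_{C}$ algebraic, so the closure of $\Gamma|_{C}$ in $\widebar{S}\times\widebar{X}$ is an honest algebraic curve, and moreover the valuative criterion of properness for $\widebar{X}$ forces $\varphi|_{C}$ to acquire well-defined limits in $\widebar{X}$ at the points of $\widebar{C}\setminus C$ lying over the boundary. Thus $\Gamma$ is foliated by algebraic curves whose closures extend, with controlled limits, across the boundary. To convert this curve-by-curve control into analyticity of $\widebar{\Gamma}$, I would invoke an extension theorem for analytic sets of Bishop type, for which the missing input is a uniform local bound on the volume of $\Gamma$ near a boundary point. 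This uniform bound is precisely the hard, transcendental heart of the argument: one must move an algebraic curve through the boundary point in a holomorphic family and ``specialise'' it to the boundary, using boundedness of families of curves of bounded degree on the projective scheme $\widebar{S}\times\widebar{X}$ to extract the required volume estimate. Once this estimate is in hand, Bishop's theorem makes $\widebar{\Gamma}$ analytic and the dichotomy above finishes the proof.
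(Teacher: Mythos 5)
Your reduction is set up correctly: (i)~$\Rightarrow$~(ii) is trivial, the graph $\Gamma\subset (S\times X)^{\an}$ of $\varphi$ is closed because $X$ is separated, its closure $\widebar{\Gamma}$ in $(\widebar{S}\times\widebar{X})^{\an}$ adds points only over $\widebar{S}\setminus S$, and if $\widebar{\Gamma}$ were known to be a closed analytic subset, then GAGA for the proper scheme $\widebar{S}\times\widebar{X}$, together with the fact that a morphism of finite type $\CC$-schemes is an isomorphism as soon as its analytification is, would indeed produce an algebraic $f$ with $f^{\an}=\varphi$. The genuine gap is the step you yourself flag as ``the hard, transcendental heart'': the uniform local volume bound required by Bishop's extension theorem is never proved, and the route you indicate for it is circular. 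Hypothesis (ii) makes each restriction $\varphi|_C$ algebraic, but it gives no control whatsoever on the \emph{degree} of the closure of the graph of $\varphi|_C$ in $\widebar{S}\times\widebar{X}$ as $C$ varies through a family sweeping out a neighbourhood of a boundary point; ``boundedness of families of curves of bounded degree'' has nothing to act on until a uniform degree bound is established, and such a bound is, via Crofton-type integral geometry, essentially equivalent to the volume estimate you are after. Worse, finiteness of the volume of $\Gamma$ near a point of $\widebar{S}\setminus S$ is tantamount to meromorphic extension of $\varphi$ across the boundary, i.e.\ to the conclusion of the theorem. So the missing step is not a technical input one could hope to quote; it is the entire content, and nothing in hypothesis (ii) excludes a priori that the degrees of the curves $\varphi|_C$ blow up as $C$ approaches the boundary. (One could try a Baire-category argument on the parameter space of slices, in the spirit of the classical fact that an entire function which is a polynomial on every affine line is a polynomial, but no such argument appears in your sketch.)

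By contrast, the paper's proof (Theorem \ref{thm:test_on_curves_in_body}) is engineered to avoid any uniformity over families of curves. After reducing to $S$ normal and irreducible (Lemma \ref{Lemma:Borel_via_red} and Lemma \ref{Lemma:normalization_and_algebraization}), it inducts on $\dim S$ using the Dimension Lemma (Proposition \ref{Prop:DimensionLemma}), whose mechanism is local and formal at a single point: the Taylor expansions of the components of $\varphi$ are defined over a countable algebraically closed field $k$, one chooses a \emph{single} $k$-generic hyperplane slice, and the transcendental specialization lemma (Lemma \ref{Lem:TranscendentalSpecialisationPreservesTranscendentalFunctions}) shows that algebraicity of the restriction to that one slice already forces algebraicity of the Taylor series themselves. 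Integral closedness of regular functions in holomorphic functions (Proposition \ref{Prop:RegularFunctionsIntegrallyClosedInHolomorphicFunctions} and Corollary \ref{Cor:RegularFunctionsIntegrallyClosedInHolomorphicFunctionsOnOpen}) then converts the resulting algebraic equations into actual regularity; no compactification of $S$, no Bishop-type theorem, and no degree bound ever enter. If you want to salvage your graph-closure strategy, the piece you must supply is precisely a degree bound on a non-meagre family of curve slices near each boundary point; as it stands, the proposal assumes what it needs to prove.
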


\subsection{Outline of paper} 
Borel hyperbolicity is a ``transcendental'' property that     only makes sense for algebraic varieties. It therefore comes as no surprise that our study of Borel hyperbolic varieties   relies on several basic properties  of complex analytic spaces; we collect the relevant results in Section \ref{section22}. The main result of Section \ref{section2} is a transcendental specialization lemma for power series stated and proven in Section \ref{section:spec}; see Lemma \ref{Lem:TranscendentalSpecialisationPreservesTranscendentalFunctions}.

In Section \ref{section:bh}, we prove that one can test Borel hyperbolicity on holomorphic maps from smooth curves (Theorem \ref{thm:test_on_curves_in_body}). This theorem is an application of the main result of Section \ref{section:spec} and basic properties of complex analytic spaces.

We then use that one can test Borel hyperbolicity on maps from curves to prove Theorem \ref{thm:why_hyp}. Next, we use Riemann's existence theorem to prove that one can descend Borel hyperbolicity along finite \'etale maps. We then combine the fact that one can test Borel hyperbolicity on maps from curves with  a simple Lemma (Lemma \ref{lem:1_Borel_after_qf}) to conclude the proofs of Theorems  \ref{thm2} and   \ref{et}. 
 
 \subsection{Acknowledgements}
 We  thank    Benjamin Bakker, Damian Brotbek, Yohan Brunebarbe, Fr\'ed\'eric Campana, Lionel Darondeau,  Daniel Litt, and Erwan Rousseau for helpful discussions.
 This research was supported through the programme  ``Oberwolfach Leibniz Fellows'' by the Mathematisches Forschungsinstitut Oberwolfach in 2018.  The first named author gratefully acknowledges  support from SFB Transregio/45. This work was begun when the second named author was an ETH Fellow.

 \section{Transcendental specialization}\label{section2}
 
\subsection{Recollections about analytification}\label{section21}

With the hope towards greater clarity, structure sheaves of schemes (in particular sheaves of regular functions on varieties) will be denoted by $\mathcal{O}$, and structure sheaves of complex-analytic spaces (in particular sheaves of holomorphic functions on reduced complex-analytic spaces) will be denoted by~$\mathcal{H}$. This is also the convention used in Serre's GAGA \cite{Serre}.  The standard reference for the   theory of complex-analytic spaces is \cite{GrauertRemmert}.

Recall that there is an ``analytification'' functor $X\mapsto X^{\an}$ from the category of finite type schemes over $\CC$ to the category of complex-analytic spaces, see \cite[Expos\'e~XII]{SGA1}. It can be described in various ways, including the following universal property: for every finite type scheme $X$ over $\CC$ there is a complex-analytic space $X^{\an }$ together with a morphism of locally ringed spaces over $\Spec\CC$:
\begin{equation*}
i_X\colon (X^{\an },\mathcal{H}_{X^{\an }})\to (X,\mathcal{O}_X),
\end{equation*}
such that for every complex-analytic space $\mathfrak{Y}$, every morphism of locally ringed spaces $(\mathfrak{Y},\mathcal{H}_{\mathfrak{Y}})\to (X,\mathcal{O}_X)$ factors uniquely over~$i_X$.

For a finite type  $\CC$-scheme $X$ the map $i_X^{\sharp}\colon\mathcal{O}(X)\to\mathcal{H}(X^{\an })$ is injective. We will therefore often identify a regular function on $X$ with the holomorphic function on $X^{\an }$ it defines. Indeed,  often we will consider a holomorphic function $f$ on $X^{\an }$, and then say ``$f$ is regular'', meaning ``$f$ is the image under $i_X^{\sharp }$ of a regular function on~$X$''.

Recall that, for a finite type scheme $X$ over $\bbc$, the obvious map $X^{\an }\to X(\bbc )$ is a bijection. Moreover for every point $x\in X(\bbc )$, we obtain a local homomorphism of local rings $\mathcal{O}_{X,x}\to\mathcal{H}_{X^{\an },x}$; this is always injective but in general not surjective. The induced homomorphism on completions $\hat{\mathcal{O}}_{X,x}\to\hat{\mathcal{H}}_{X^{\an },x}$, however, is always an isomorphism  \cite[Th\'eor\`eme~XII.1]{SGA1}. For a holomorphic function $f$ on an open neighbourhood of $x$ in $X^{\an }$ we therefore obtain an element of $\hat{\mathcal{O}}_{X,x}$ which we call the \emph{Taylor expansion} of $f$ at~$x$.

The universal property of the map $(X^{\an },\mathcal{H}_{X^{\an }})\to (X,\mathcal{O}_X)$ turns analytification into a functor: for a morphism $f\colon X\to Y$ of schemes of finite type over $\bbc$ we obtain a morphism of complex-analytic spaces $f^{\an }\colon X^{\an }\to Y^{\an }$, uniquely characterized by the condition that the diagram
\begin{equation*}
\xymatrix{
	X^{\an }\ar[r]^-{f^{\an }} \ar[d]_-{i_X} & Y^{\an }\ar[d]^-{i_Y}\\
	X\ar[r]_-f & Y
	}
\end{equation*}
commutes. The thus defined map $\Hom (X,Y)\to\Hom (X^{\an },Y^{\an })$ is injective. We therefore will, as in the case of functions, often identify a morphism $X\to Y$ with its analytification $X^{\an }\to Y^{\an }$, and say that a morphism $X^{\an }\to Y^{\an }$ is algebraic, or algebraizes, if it comes from a (necessarily unique) morphism $X\to Y$.

   \subsection{Complex-analytic results}\label{section22}
  We next assemble some structure results about ring extensions occurring in complex-analytic geometry.
For the first auxiliary result, recall that a holomorphic map between complex-analytic spaces is called finite if it is  proper with finite fibres, and that a complex-analytic space $\mathfrak{X}$ is called irreducible if it cannot be written as a union $\mathfrak{X}=\mathfrak{X}_1\cup\mathfrak{X}_2$ with $\mathfrak{X}_i\subsetneqq\mathfrak{X}$ closed complex subspaces.
\begin{Lemma}\label{Lem:FiniteHolomorphicMapWithInverseIsIsomorphism}
	If a finite holomorphic map $\pi\colon\mathfrak{Y}\to\mathfrak{X}$ of reduced and irreducible complex-analytic spaces admits a holomorphic section $s\colon\mathfrak{X}\to\mathfrak{Y}$ then it is an isomorphism.
\end{Lemma}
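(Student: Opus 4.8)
The plan is to use the section $s$ to realise $\mathfrak{X}$ as a closed analytic subset of $\mathfrak{Y}$, and then to force this subset to be all of $\mathfrak{Y}$ by a dimension count exploiting the irreducibility of $\mathfrak{Y}$.

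First I would note that $\pi$ is surjective, since $\pi\circ s=\mathrm{id}_{\mathfrak{X}}$ shows that every point of $\mathfrak{X}$ is hit. Set $Z:=s(\mathfrak{X})$. Because complex-analytic spaces are Hausdorff, the diagonal $\Delta\subseteq\mathfrak{Y}\times\mathfrak{Y}$ is a closed analytic subset, and $Z$ is precisely the preimage of $\Delta$ under the holomorphic map $(\mathrm{id}_{\mathfrak{Y}},\,s\circ\pi)\colon\mathfrak{Y}\to\mathfrak{Y}\times\mathfrak{Y}$; indeed $y\in Z$ if and only if $s(\pi(y))=y$. Hence $Z$ is a closed analytic subset of $\mathfrak{Y}$. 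Endowing $Z$ with its reduced structure, the corestriction $s\colon\mathfrak{X}\to Z$ and the restriction $\pi|_Z\colon Z\to\mathfrak{X}$ are mutually inverse holomorphic maps, so $s$ is an isomorphism onto $Z$. In particular $Z$ is reduced and irreducible with $\dim Z=\dim\mathfrak{X}$.

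The heart of the argument is then the dimension count. Since $\pi$ is finite, its fibres are finite and hence zero-dimensional, which gives $\dim\mathfrak{Y}\le\dim\mathfrak{X}$. On the other hand, $\mathfrak{Y}$ being irreducible, any proper closed analytic subset of $\mathfrak{Y}$ has dimension strictly smaller than $\dim\mathfrak{Y}$. If we had $Z\subsetneq\mathfrak{Y}$, this would give $\dim\mathfrak{X}=\dim Z<\dim\mathfrak{Y}\le\dim\mathfrak{X}$, a contradiction; therefore $Z=\mathfrak{Y}$. As $\mathfrak{Y}$ is reduced and $Z$ carries the reduced structure on the same support, this is an equality of complex spaces, so $s\colon\mathfrak{X}\to\mathfrak{Y}$ is an isomorphism. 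Finally, $\pi\circ s=\mathrm{id}$ then yields $\pi=s^{-1}$, so $\pi$ is an isomorphism.

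I expect the main obstacle to be the purely complex-analytic input rather than the formal skeleton: namely, that a finite holomorphic map does not raise dimension (so that finiteness of the fibres gives $\dim\mathfrak{Y}\le\dim\mathfrak{X}$), and that in an irreducible complex space every proper closed analytic subset has strictly smaller dimension. Both are standard facts from the dimension theory of complex-analytic spaces (see \cite{GrauertRemmert}), and once they are in place the conclusion is immediate. One should also take a little care to check that $Z=s(\mathfrak{X})$ really is a closed analytic subset and that $s$ is biholomorphic onto it; this is exactly where the Hausdorffness of complex-analytic spaces enters.
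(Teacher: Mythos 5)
Your proof is correct, but it follows a genuinely different route from the paper's. The paper uses properness of $\pi$ to deduce that the section $s$ is proper and injective, hence finite, so that $s(\mathfrak{X})$ is a closed analytic subset; it then invokes Sard's theorem to produce a dense open $\mathfrak{U}\subseteq\mathfrak{X}$ over which $\pi$ is a finite covering, so that $s(\mathfrak{U})$ is \emph{open} in $\mathfrak{Y}$, and concludes $s(\mathfrak{X})=\mathfrak{Y}$ from irreducibility via the decomposition $\mathfrak{Y}=s(\mathfrak{X})\cup(\mathfrak{Y}\setminus s(\mathfrak{U}))$; finally, $\pi$ and $s$ are mutually inverse holomorphic bijections, hence isomorphisms. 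You instead obtain closedness and analyticity of $Z=s(\mathfrak{X})$ by the graph trick (preimage of the diagonal under $(\mathrm{id},s\circ\pi)$, using Hausdorffness), identify $Z\cong\mathfrak{X}$ via the mutually inverse reduced holomorphic maps $s$ and $\pi|_Z$, and then force $Z=\mathfrak{Y}$ by dimension theory: the fibre-dimension theorem gives $\dim\mathfrak{Y}\le\dim\mathfrak{X}$ for a map with finite fibres, while Ritt's lemma (together with pure-dimensionality of irreducible complex spaces) says a proper closed analytic subset of the irreducible $\mathfrak{Y}$ has strictly smaller dimension. Both routes are sound; the trade-off is that the paper stays within topological tools (properness, covering spaces, Sard) and avoids dimension theory, whereas your argument leans on the standard dimension theory of complex spaces but, notably, never uses properness of $\pi$ at all --- only finiteness of its fibres --- so it in fact proves the stronger statement that the lemma holds for quasi-finite holomorphic maps, not just finite ones.
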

\begin{proof}
	Since $\pi$ is proper and $\mathfrak{X}$ and $\mathfrak{Y}$ are locally compact, $s$ is also proper. Since it is injective, it is then also finite. Hence $s(\mathfrak{X})$ is a closed analytic subspace of~$\mathfrak{X}$.
		
	By Sard's Theorem there is an open dense subspace $\mathfrak{U}\subseteq\mathfrak{X}$ such that $\pi^{-1}(\mathfrak{U})\to\mathfrak{U}$ is a finite covering space. Hence $s(\mathfrak{U})$ is an open subspace of $\pi^{-1}(\mathfrak{U})$. Therefore we can find a decomposition $\mathfrak{Y}= s(\mathfrak{X})\cup (\mathfrak{Y}\setminus s(\mathfrak{U}))$ into closed complex subspaces. Now as $\mathfrak{Y}$ is irreducible and $s(\mathfrak{U})\neq\varnothing$, hence $\mathfrak{Y}\setminus s(\mathfrak{U})\neq\mathfrak{Y}$, we conclude that $s(\mathfrak{X})=\mathfrak{Y}$.
	
	Therefore $\pi$ and $s$ are mutually inverse bijections. Both of them are finite holomorphic maps, hence they are isomorphisms of complex-analytic spaces.
\end{proof}

We will also need an auxiliary construction. Suppose that $\mathfrak{X}$ is a complex-analytic space and $B\subseteq\mathcal{H}(\mathfrak{X})$ is a $\CC$-subalgebra. Then, by the universal property of affine schemes within the category of locally ringed spaces (see \cite[Errata et Addenda, Corollaire~1.8.3]{EGAII}), there exists a unique morphism of locally ringed spaces over~$\Spec\CC$:
\begin{equation*}
\varphi\colon\mathfrak{X}\to\Spec B
\end{equation*}
which on global sections induces the inclusion $B\hookrightarrow\mathcal{H}(\mathfrak{X})$. Now, if $B$ is a finitely generated $\CC$-algebra, then $Y=\Spec B$ is a $\CC$-scheme of finite type, and then by the universal property of analytifications, $\varphi$ induces a morphism of complex-analytic spaces
\begin{equation}\label{eqn:MapFromAnalyticSpaceToSpectrumSubalgebra}
\varphi\colon\mathfrak{X}\to Y^{\an }.
\end{equation}
By unravelling the definitions, we see that on underlying points this is the map $\mathfrak{X}\to Y^{\an }\cong\Hom_{\CC }(B,\CC)$ sending $x\in\mathfrak{X}$ to the evaluation morphism $B\to\CC$, $f\mapsto f(x)$.

\begin{Proposition}\label{Prop:RegularFunctionsIntegrallyClosedInHolomorphicFunctions}
If  $X$ is a finite type integral scheme over $\CC$, then the ring $\mathcal{O}(X)$ of regular functions is integrally closed in the ring $\mathcal{H}(X^{\an })$ of holomorphic functions.
\end{Proposition}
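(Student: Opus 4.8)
The plan is to reduce to the affine case and then glue. First I would check that it suffices to treat the case where $X=\Spec A$ is affine, with $A=\mathcal{O}(X)$ a finitely generated integral domain over $\CC$. Indeed, given $f\in\mathcal{H}(X^{\an})$ integral over $\mathcal{O}(X)$, its restriction to any affine open $U\subseteq X$ is integral over $\mathcal{O}(U)$ (just restrict the monic equation), so the affine case gives $f|_{U^{\an}}\in\mathcal{O}(U)$; since the maps $\mathcal{O}(U\cap V)\hookrightarrow\mathcal{H}((U\cap V)^{\an})$ are injective, these local regular functions agree on overlaps and glue, by the sheaf property of $\mathcal{O}_X$, to a regular function equal to $f$. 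This reduction is genuinely needed, since the auxiliary construction \eqref{eqn:MapFromAnalyticSpaceToSpectrumSubalgebra} requires a \emph{finitely generated} subalgebra, whereas $\mathcal{O}(X)$ need not be finitely generated for non-affine $X$.

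So assume $X=\Spec A$ is affine and integral, and let $f\in\mathcal{H}(X^{\an})$ satisfy a monic equation over $A$. Set $B=A[f]\subseteq\mathcal{H}(X^{\an})$, a finite $A$-module, and $Y=\Spec B$. The crucial structural input is that $X^{\an}$ is reduced and irreducible: reducedness follows from that of $X$, and irreducibility holds because a reduced complex space is irreducible as soon as its regular locus is connected, while here the regular locus of $X^{\an}$ is the analytification of the smooth locus $X_{\mathrm{sm}}$, which is irreducible (open dense in the irreducible $X$) and hence has connected analytification. Consequently $\mathcal{H}(X^{\an})$ is an integral domain (a relation $gh=0$ with $g,h\neq 0$ would exhibit $X^{\an}$ as the union of the two proper analytic subsets $\{g=0\}$ and $\{h=0\}$), so $B$ is a domain and $Y$ is integral with $Y^{\an}$ again reduced and irreducible. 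The inclusion $A\hookrightarrow B$ defines a finite morphism $\pi\colon Y\to X$, whence $\pi^{\an}$ is finite; and applying the construction \eqref{eqn:MapFromAnalyticSpaceToSpectrumSubalgebra} to $B\subseteq\mathcal{H}(X^{\an})$ yields a holomorphic map $\varphi\colon X^{\an}\to Y^{\an}$ sending $x$ to the evaluation $B\to\CC$, $g\mapsto g(x)$. By construction $\pi^{\an}\circ\varphi=\id_{X^{\an}}$, so $\varphi$ is a holomorphic section of $\pi^{\an}$, and Lemma \ref{Lem:FiniteHolomorphicMapWithInverseIsIsomorphism} forces $\pi^{\an}$ to be an isomorphism.

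It remains to descend this to algebra. Since $\pi^{\an}$ is an isomorphism it induces isomorphisms on all analytic local rings, hence, via the comparison isomorphism $\hat{\mathcal{O}}_{X,x}\cong\hat{\mathcal{H}}_{X^{\an},x}$ of \cite[Th\'eor\`eme~XII.1]{SGA1}, isomorphisms $\hat{\mathcal{O}}_{X,x}\cong\hat{\mathcal{O}}_{Y,y}$ compatible with $\pi$ at each closed point. Because $\pi^{\an}$ is bijective there is a unique point of $Y$ over each closed point $x$, so for the corresponding maximal ideal $\mathfrak p$ of $A$ the localization $B\otimes_A A_{\mathfrak p}$ is local, and the finite local map $A_{\mathfrak p}\to B\otimes_A A_{\mathfrak p}$ is an isomorphism on completions; faithful flatness of completion then forces the cokernel to vanish, giving $A_{\mathfrak p}=B\otimes_A A_{\mathfrak p}$. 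As this holds at every maximal ideal, $A=B$, so $f\in A$ is regular, completing the affine case.

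I expect the main obstacle to be the structural step ensuring that Lemma \ref{Lem:FiniteHolomorphicMapWithInverseIsIsomorphism} applies, namely the irreducibility of both $X^{\an}$ and $Y^{\an}$ (so that $\mathcal{H}(X^{\an})$ is a domain and the section argument is available); the final passage from ``$\pi^{\an}$ is an isomorphism'' to ``$\pi$ is an isomorphism'' also requires care, and is precisely where the comparison of completed local rings enters.
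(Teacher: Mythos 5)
Your proposal is correct and follows essentially the same route as the paper: reduce to $X=\Spec A$ affine, set $B=A[f]$ and $Y=\Spec B$, use the construction \eqref{eqn:MapFromAnalyticSpaceToSpectrumSubalgebra} to produce a holomorphic section of the finite map $\pi^{\an}\colon Y^{\an}\to X^{\an}$, and invoke Lemma~\ref{Lem:FiniteHolomorphicMapWithInverseIsIsomorphism} to conclude that $\pi^{\an}$ is an isomorphism. The only deviation is in the last step, where the paper simply cites \cite[Proposition~XII.3.1]{SGA1} to pass from ``$\pi^{\an}$ is an isomorphism'' to ``$\pi$ is an isomorphism'', while you reprove that special case by hand via completed local rings and faithful flatness of completion; you also spell out the affine reduction and the irreducibility of $X^{\an}$ (hence that $\mathcal{H}(X^{\an})$ is a domain), details the paper leaves implicit, and all of these arguments are sound.
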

\begin{proof}
	It is straightforward to reduce to the case where $X$ is affine. Write $X=\Spec A$, so that $\mathcal{O}(X)=A$.
	Let $f\in\mathcal{H}(X^{\an })$ be integral over $A$ and set $B=A[f]$; we will show that $A=B$. By assumption, $B$ is a finite extension of $A$, and since it is contained in the domain $\mathcal{H}(X^{\an })$ it must be a domain itself. Hence $Y=\Spec B$ is also a complex variety, coming with a finite surjective morphism $\pi\colon Y\to X$ induced by the inclusion $A\hookrightarrow B$. We need to show that $\pi$ is an isomorphism, for then $f\in A$.
	
	There is a canonical \emph{analytic} section $s\colon X^{\an }\to Y^{\an }$, constructed from the inclusion $B\hookrightarrow\mathcal{H}(X^{\an })$ as in~(\ref{eqn:MapFromAnalyticSpaceToSpectrumSubalgebra}). Hence, by Lemma~\ref{Lem:FiniteHolomorphicMapWithInverseIsIsomorphism} the holomorphic map $\pi^{\an }\colon Y^{\an}\to X^{\an}$ is an isomorphism of complex-analytic spaces. Therefore, by \cite[Proposition~XII.3.1]{SGA1}, the morphism $\pi$  is an isomorphism.
\end{proof}
\begin{Proposition}\label{Prop:HolomorphicFunctionsAreIntegrallyClosedInOpen}
	Let $\mathfrak{X}$ be a normal complex-analytic space; let $A\subset\mathfrak{X}$ be a proper closed analytic subset. Then the ring of holomorphic functions $\mathcal{H}(\mathfrak{X})$ is integrally closed in $\mathcal{H}(\mathfrak{X}\setminus A)$.
\end{Proposition}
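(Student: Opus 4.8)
The plan is to reduce the assertion to the Riemann extension theorem for normal complex spaces. Concretely, I would take an $f\in\mathcal{H}(\mathfrak{X}\setminus A)$ that is integral over $\mathcal{H}(\mathfrak{X})$ and show that $f$ extends to a (necessarily unique) holomorphic function on all of $\mathfrak{X}$; this unique extension then realises $f$ as the restriction of an element of $\mathcal{H}(\mathfrak{X})$, which is exactly the claimed integral closedness. To begin, I would fix a monic integral equation
\begin{equation*}
f^n+a_{n-1}f^{n-1}+\cdots+a_1 f+a_0=0\qquad\text{on }\mathfrak{X}\setminus A,
\end{equation*}
with coefficients $a_0,\dots,a_{n-1}\in\mathcal{H}(\mathfrak{X})$.

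First I would establish that $f$ is locally bounded along $A$. Given $x\in A$, I would choose a compact neighbourhood $K$ of $x$ in $\mathfrak{X}$; each $a_i$ is continuous, hence bounded on $K$, say $|a_i|\le M-1$ there, with $M:=1+\max_i\sup_K|a_i|<\infty$. For every $y\in K\setminus A$ the value $f(y)$ is a root of the monic polynomial $T^n+a_{n-1}(y)T^{n-1}+\cdots+a_0(y)$, all of whose coefficients have absolute value $<M$, and the elementary bound on the roots of a monic polynomial in terms of its coefficients then yields $|f(y)|\le M$. Thus $f$ is bounded on $K\setminus A$, and since $x$ was arbitrary, $f$ is locally bounded along $A$.

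Next I would appeal to Riemann's second extension theorem. Because $\mathfrak{X}$ is normal it is locally irreducible, so the proper closed analytic subset $A$ is nowhere dense; I would argue connected component by connected component, the only exceptional case being a component entirely contained in $A$, where there is nothing to extend. The second extension theorem for normal complex spaces --- which in fact characterises normality, see \cite{GrauertRemmert} --- guarantees that a holomorphic function on $\mathfrak{X}\setminus A$ that is locally bounded along the nowhere dense analytic set $A$ extends to some $\tilde f\in\mathcal{H}(\mathfrak{X})$. Applying this to $f$ and noting that the extension is unique, since $\mathfrak{X}\setminus A$ is dense and $\mathfrak{X}$ is reduced, I would conclude that $f=\tilde f|_{\mathfrak{X}\setminus A}$ lies in the image of $\mathcal{H}(\mathfrak{X})$.

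I expect the only genuinely substantive input to lie in the extension step: once local boundedness is secured, normality is precisely the property that upgrades a bounded holomorphic function off a nowhere dense analytic set to a genuine holomorphic function, so the main task is to invoke the correct form of the Riemann continuation theorem and to verify its hypotheses. The boundedness estimate is elementary, and the reduction to the nowhere dense case is routine given the local irreducibility of normal complex spaces.
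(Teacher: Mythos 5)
Your proof is correct and takes essentially the same route as the paper: derive local boundedness of $f$ along $A$ from the monic integral equation (via the elementary root bound), then apply the Riemann extension theorem for locally bounded holomorphic functions on a normal complex space (the paper cites Grauert--Remmert 1958, Satz 13), your treatment merely being more detailed about nowhere-density and components. One terminological slip: the locally-bounded statement you invoke is usually called the \emph{first} Riemann extension theorem (the \emph{second} is the codimension-$\geq 2$ statement needing no boundedness hypothesis), but the content you use --- and its role as a characterisation of normality --- is exactly right.
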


\begin{proof}
	Assume $f$ is holomorphic on $\mathfrak{X}\setminus A$ and it satisfies a relation of the form $f^d+a_{d-1}f^{d-1}+\dotsb +a_0=0$, where the $a_i$ are holomorphic on all of~$\mathfrak{X}$. Then around each point of $A$ the $a_i$ are bounded, hence so is $f$, and by Riemann's extension theorem (see  \cite[Satz~13]{GrauertRemmert1958}) it can be extended to a holomorphic function on all of~$\mathfrak{X}$.
\end{proof}

\begin{Remark} Note that   we need to assume that $\mathfrak{X}$ is normal in Proposition \ref{Prop:HolomorphicFunctionsAreIntegrallyClosedInOpen}: for instance, if $\mathfrak{X}$ is a non-normal Stein space and $A\subset\mathfrak{X}$ contains the locus of non-normality, then   $\mathcal{H}(\mathfrak{X})$ is not integrally closed in $\mathcal{H}(\mathfrak{X}\setminus A)$.
\end{Remark}

 \begin{Corollary}\label{Cor:RegularFunctionsIntegrallyClosedInHolomorphicFunctionsOnOpen}
	Let $X$ be a normal integral finite type scheme over $\CC$    and let $A\subset X^{\an }$ be a proper closed analytic subset. Then the ring of regular functions $\mathcal{O}(X)$ is integrally closed in the ring of holomorphic functions $\mathcal{H}(X^{\an }\setminus A)$. 
\end{Corollary}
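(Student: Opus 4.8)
The plan is to deduce the Corollary by chaining the two preceding propositions along the tower of inclusions
\[
\mathcal{O}(X)\subseteq\mathcal{H}(X^{\an})\subseteq\mathcal{H}(X^{\an}\setminus A).
\]
First I would take a holomorphic function $f\in\mathcal{H}(X^{\an}\setminus A)$ that is integral over $\mathcal{O}(X)$, say satisfying a monic relation $f^d+a_{d-1}f^{d-1}+\dotsb+a_0=0$ with all $a_i\in\mathcal{O}(X)$, and aim to show $f\in\mathcal{O}(X)$.

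The key preliminary observation, and the only point where I expect any subtlety, is that $X^{\an}$ is a normal complex-analytic space. Since $X$ is of finite type over $\CC$ it is excellent, so normality of the algebraic local ring $\mathcal{O}_{X,x}$ is equivalent to normality of its completion $\hat{\mathcal{O}}_{X,x}$; via the isomorphism $\hat{\mathcal{O}}_{X,x}\cong\hat{\mathcal{H}}_{X^{\an},x}$ recalled in Section \ref{section21}, the same holds for the analytic local ring $\mathcal{H}_{X^{\an},x}$, so $X^{\an}$ is normal. (Equivalently, one simply invokes the standard fact that analytification preserves normality.)

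With this in hand, I would first read the integral relation as a relation over $\mathcal{H}(X^{\an})$: the coefficients $a_i$, being regular on $X$, are holomorphic on all of $X^{\an}$, and the relation holds on $X^{\an}\setminus A$. Because $A$ is a proper closed analytic subset of the \emph{normal} space $X^{\an}$, Proposition \ref{Prop:HolomorphicFunctionsAreIntegrallyClosedInOpen} applied with $\mathfrak{X}=X^{\an}$ shows that $\mathcal{H}(X^{\an})$ is integrally closed in $\mathcal{H}(X^{\an}\setminus A)$; hence $f$ extends to a holomorphic function on all of $X^{\an}$, that is, $f\in\mathcal{H}(X^{\an})$.

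It then remains to descend from holomorphic to regular. The function $f\in\mathcal{H}(X^{\an})$ is still integral over $\mathcal{O}(X)$ via the same monic relation, and since $X$ is integral and of finite type over $\CC$, Proposition \ref{Prop:RegularFunctionsIntegrallyClosedInHolomorphicFunctions} gives that $\mathcal{O}(X)$ is integrally closed in $\mathcal{H}(X^{\an})$, whence $f\in\mathcal{O}(X)$. The whole argument is therefore a two-step integral-closure chase, and the only ingredient requiring care beyond the two propositions is the passage of normality from $X$ to $X^{\an}$, which is precisely what licenses the application of Proposition \ref{Prop:HolomorphicFunctionsAreIntegrallyClosedInOpen}.
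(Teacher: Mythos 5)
Your proof is correct and is essentially identical to the paper's own argument: both chain Proposition \ref{Prop:RegularFunctionsIntegrallyClosedInHolomorphicFunctions} and Proposition \ref{Prop:HolomorphicFunctionsAreIntegrallyClosedInOpen} along the inclusions $\mathcal{O}(X)\subseteq\mathcal{H}(X^{\an})\subseteq\mathcal{H}(X^{\an}\setminus A)$. Your only addition is making explicit that normality of $X$ passes to $X^{\an}$ (which the paper leaves implicit but is indeed needed to apply Proposition \ref{Prop:HolomorphicFunctionsAreIntegrallyClosedInOpen}), and your justification of that fact is sound.
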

\begin{proof}
	By Proposition  \ref{Prop:RegularFunctionsIntegrallyClosedInHolomorphicFunctions}, the ring $\mathcal O(X)$ is integrally closed in $\mathcal H(X^{\an})$. Moreover, by Proposition \ref{Prop:HolomorphicFunctionsAreIntegrallyClosedInOpen}, the ring $\mathcal H(X^{\an})$ is integrally closed in $\mathcal{H}(X^{\an}\setminus A)$. Thus, we conclude that $\mathcal{O}(X)$ is integrally closed in $\mathcal{H}(X^{\an}\setminus A)$.
\end{proof}

 \subsection{A specialization lemma for power series}\label{section:spec}
 Our aim in this section is to prove a ``transcendental'' specialization lemma. Similar (but weaker) results are proven in \cite{Osgood}.
 
 In the following we will assume that $k\subset\bbc$ is an algebraically closed subfield such that $\bbc$ has infinite transcendence degree over~$k$. Then we can embed the polynomial ring $k[x_1,\ldots ,x_{n+1}]$ into a polynomial ring of one dimension lower, $\bbc [z_1,\ldots ,z_n]$, as follows: choose some $\lambda_1,\ldots ,\lambda_n\in\bbc$ which are algebraically independent over $k$; this is possible by our assumption. Then we define a ring homomorphism
\begin{equation}\label{eqn:TranscendentalSpecialisationOnPolynomials}
\iota =\iota_{\lambda_1,\ldots ,\lambda_n}\colon k[x_1,\ldots ,x_{n+1}]\to\bbc [z_1,\ldots ,z_n]
\end{equation}
by letting $\iota |_k$ be the inclusion $k\hookrightarrow\bbc$, sending $x_j$ to $z_j$ for $1\le j\le n$, and sending $x_{n+1}$ to the linear polynomial $\lambda_1z_1+\dotsb +\lambda_nz_n$. This homomorphism extends in an obvious way to rings of formal power series:
\begin{equation}\label{eqn:TranscendentalSpecialisationOnPowerSeries}
\iota\colon k[\! [x_1,\ldots ,x_{n+1}]\! ]\to\bbc [\! [z_1,\ldots ,z_n]\!] .
\end{equation}
\begin{Lemma}\label{Lem:TranscendentalSpecialisationIsInjective}
	Under the given assumptions the maps (\ref{eqn:TranscendentalSpecialisationOnPolynomials}) and (\ref{eqn:TranscendentalSpecialisationOnPowerSeries}) are injective.
\end{Lemma}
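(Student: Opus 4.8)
The plan is to reduce both assertions to a single statement about algebraic independence and then bootstrap from polynomials to power series using the fact that $\iota$ is degree preserving. For the polynomial map (\ref{eqn:TranscendentalSpecialisationOnPolynomials}), recall that a $k$-algebra homomorphism out of a polynomial ring $k[x_1,\ldots ,x_{n+1}]$ is injective if and only if the images of the generators are algebraically independent over $k$. Hence it suffices to prove that the $n+1$ elements
\begin{equation*}
z_1,\ \ldots,\ z_n,\qquad w\defined\lambda_1z_1+\dotsb +\lambda_nz_n
\end{equation*}
of $\bbc [z_1,\ldots ,z_n]$ are algebraically independent over $k$.

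To prove this, I would first note that $z_1,\ldots ,z_n$ are algebraically independent over $k$, being indeterminates over $\bbc\supseteq k$, so the whole matter comes down to showing that $w$ is transcendental over $K\defined k(z_1,\ldots ,z_n)$. The key input is that $\lambda_1,\ldots ,\lambda_n$ remain algebraically independent over $K$: they are algebraically independent over $k$ by choice, while the $z_i$ are algebraically independent over $\bbc$ and hence over $k(\lambda_1,\ldots ,\lambda_n)$; combining these two facts shows that the $2n$ elements $\lambda_1,\ldots ,\lambda_n,z_1,\ldots ,z_n$ are algebraically independent over $k$, and in particular $\lambda_1,\ldots ,\lambda_n$ are algebraically independent over $K$. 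Then $w=\sum_i z_i\lambda_i$ is a nonconstant element (it has degree $1$, with coefficients $z_i\in K$ not all zero) of the polynomial ring $K[\lambda_1,\ldots ,\lambda_n]$, and a nonconstant polynomial in a polynomial ring over a field is transcendental over that field. This yields the transcendence of $w$ over $K$ and completes the polynomial case.

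For the power series map (\ref{eqn:TranscendentalSpecialisationOnPowerSeries}), the crucial observation is that $\iota$ is \emph{homogeneous}: each generator $x_j$ (for $j\le n$) and $x_{n+1}$ is sent to an element of $\bbc[z_1,\ldots ,z_n]$ that is homogeneous of degree $1$, so a monomial of total degree $d$ is sent to a homogeneous polynomial of degree $d$ in $z_1,\ldots ,z_n$. Writing $F=\sum_{d\ge 0}F_d$ with $F_d$ the degree-$d$ homogeneous part of $F\in k[\![x_1,\ldots ,x_{n+1}]\!]$, the element $\iota (F_d)$ is homogeneous of degree $d$, so $\sum_d\iota (F_d)$ is precisely the decomposition of $\iota (F)$ into homogeneous components. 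Therefore $\iota (F)=0$ forces $\iota (F_d)=0$ for every $d$, and the injectivity on polynomials already established gives $F_d=0$ for all $d$, whence $F=0$.

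The substantive step is the transcendence claim in the middle paragraph, namely that $w$ is transcendental over $k(z_1,\ldots ,z_n)$; this is where one must carefully deploy the exchange property of algebraic independence to see that $\lambda_1,\ldots ,\lambda_n$ stay algebraically independent after adjoining the $z_i$, which is exactly where the hypothesis that $\bbc$ has infinite (here, at least $n$) transcendence degree over $k$ enters. By contrast, the passage from power series to polynomials is essentially free, since it only uses that $\iota$ is a graded, degree-preserving homomorphism.
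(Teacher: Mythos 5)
Your proof is correct, but it runs in the opposite direction from the paper's and uses a different core argument. The paper reduces the polynomial statement to the power series statement (the former being a restriction of the latter) and then proves injectivity on power series by brute force: it expands $(\lambda_1z_1+\dotsb +\lambda_nz_n)^{i_{n+1}}$ via the multinomial theorem, collects the coefficient of each monomial $z_1^{s_1}\dotsm z_n^{s_n}$, and observes that this coefficient is a $k$-linear combination of \emph{distinct} monomials $\lambda_1^{\ell_1}\dotsm\lambda_n^{\ell_n}$ with positive integer multinomial coefficients, which can vanish only if every $a_{i_1,\ldots ,i_{n+1}}$ vanishes, by algebraic independence of the $\lambda_j$ over~$k$. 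You instead prove the polynomial case first by pure field theory --- injectivity of a $k$-algebra map out of a polynomial ring is equivalent to algebraic independence of the images of the generators, which you verify via the tower property applied to $\lambda_1,\ldots ,\lambda_n,z_1,\ldots ,z_n$ and the elementary fact that a nonconstant element of $K[\lambda_1,\ldots ,\lambda_n]$ is transcendental over~$K$ --- and then pass to power series by noting that $\iota$ is graded (each generator maps to a homogeneous element of degree one), so that $\iota (F)=0$ forces each homogeneous component $\iota (F_d)$ to vanish separately. Your route trades the paper's explicit coefficient bookkeeping for a structural argument plus the grading trick; the paper's computation is more self-contained (no appeal to transcendence-degree machinery) and pins down exactly which coefficient combinations occur, while yours isolates the conceptual content (algebraic independence of the combined family $\{\lambda_j\}\cup\{z_i\}$) and makes the power-series case an immediate formal consequence of the polynomial case. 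Both hinge on the same essential input, the algebraic independence of the $\lambda_j$ over~$k$.
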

\begin{proof}
	It suffices to show injectivity for~(\ref{eqn:TranscendentalSpecialisationOnPowerSeries}). If
	\begin{equation*}
	f=\sum_{i_1,\ldots ,i_{n+1}\ge 0}a_{i_1,\ldots ,i_{n+1}}x_1^{i_1}\dotsm x_{n+1}^{i_{n+1}}\in k[\! [x_1,\ldots ,x_{n+1}]\! ]
	\end{equation*}
	then
	\begin{equation*}
	\begin{split}
	\iota (f)&=\sum_{i_1,\ldots ,i_{n+1}\ge 0}a_{i_1,\ldots ,i_{n+1}}z_1^{i_1}\dotsm z_n^{i_n}(\lambda_1z_1+\dotsb +\lambda_nz_n)^{i_{n+1}}\\
	&=\sum_{i_1,\ldots ,i_{n+1}\ge 0}a_{i_1,\ldots ,i_{n+1}}z_1^{i_1}\dotsm z_n^{i_n}\sum_{\ell_1+\dotsb +\ell_n={i_{n+1}}}{i_{n+1}\choose \ell_1,\ldots ,\ell_n}\lambda_1^{\ell_1}\dotsm\lambda_n^{\ell_n}z_1^{\ell_1}\dotsm z_n^{\ell_n}\\
	&=\sum_{\substack{i_1,\ldots ,i_n\ge 0\\ \ell_1,\ldots ,\ell_n\ge 0}}{\ell_1+\dotsb +\ell_n\choose \ell_1,\ldots ,\ell_n}a_{i_1,\ldots ,i_n,\ell_1+\dotsb +\ell_n}\lambda_1^{\ell_1}\dotsm\lambda_n^{\ell_n}z_1^{i_1+\ell_1}\dotsm z_n^{i_n+\ell_n}\\
	&=\sum_{s_1,\ldots ,s_n\ge 0}\bigg(\sum_{\substack{i_1+\ell_1=s_1\\ \cdots \\ i_n+\ell_n=s_n}}{\ell_1+\dotsb +\ell_n\choose \ell_1,\ldots ,\ell_n}a_{i_1,\ldots ,i_n,\ell_1+\dotsb +\ell_n}\lambda_1^{\ell_1}\dotsm\lambda_n^{\ell_n}\bigg) z_1^{s_1}\dotsm z_n^{s_n}.
	\end{split}
	\end{equation*}
	Thus, if $\iota (f)=0$, then all the coefficients
	\begin{equation*}
	\sum_{\substack{i_1+\ell_1=s_1\\ \cdots \\ i_n+\ell_n=s_n}}{\ell_1+\dotsb +\ell_n\choose \ell_1,\ldots ,\ell_n}a_{i_1,\ldots ,i_n,\ell_1+\dotsb +\ell_n}\lambda_1^{\ell_1}\dotsm\lambda_n^{\ell_n}
	\end{equation*}
	have to be zero; since the multinomial coefficients ${\ell_1+\ldots +\ell_n\choose \ell_1,\ldots ,\ell_n}$ are positive integers, the $a_{i_1,\ldots ,i_{n+1}}$ are elements of $k$ and the $\lambda_i$ are algebraically independent over $k$, this can only happen if $a_{i_1,\ldots ,i_n,\ell_1+\dotsb +\ell_n}=0$ for all choices of $i_j,\ell_j\ge 0$. But then $f=0$.
\end{proof}
\begin{Lemma}\label{Lem:TranscendentalSpecialisationPreservesTranscendentalFunctions}
Let  $g\in k[\! [x_1,\ldots ,x_{n+1}]\! ]$. If  $\iota (g)\in\bbc[\! [z_1,\ldots ,z_n]\!]$ is an algebraic function (i.e. when interpreted as an element of the quotient field $\bbc (\! (z_1,\ldots ,z_n)\! )$ it is algebraic over the subfield $\bbc (z_1,\ldots ,z_n)$),
	then $g$ is   an algebraic function (i.e.\ $g$ is an element of $k(\! (x_1,\ldots ,x_{n+1})\! )$ algebraic over $k(x_1,\ldots ,x_{n+1})$).
\end{Lemma}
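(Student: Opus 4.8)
The plan is to deduce algebraicity of $g$ over $k(x_1,\ldots ,x_{n+1})$ by passing from the given algebraicity over $\bbc (z_1,\ldots ,z_n)$ to algebraicity over the much smaller field $k(z_1,\ldots ,z_n,\lambda_1z_1+\dotsb +\lambda_nz_n)$, which is precisely the image of $k(x_1,\ldots ,x_{n+1})$ under $\iota$. The first step is a routine descent of scalars. Since $\iota (g)$ lies in $k[\lambda_1,\ldots ,\lambda_n][\![z]\!]$, I would argue it is already algebraic over $k(\lambda_1,\ldots ,\lambda_n)(z)$: an algebraic relation $\sum_i c_i(z)\iota (g)^i=0$ with $c_i\in\bbc [z]$ becomes, upon comparing coefficients of each monomial $z^s$, a homogeneous linear system in the unknown coefficients of the $c_i$ whose matrix has entries in $k(\lambda )$; and a system defined over $k(\lambda )$ having a nontrivial solution over the extension field $\bbc$ has one already over $k(\lambda )$. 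Because $\lambda_1,\ldots ,\lambda_n$ are algebraically independent over $k$, I may rename them as indeterminates $t_1,\ldots ,t_n$. Writing $u=t_1z_1+\dotsb +t_nz_n$ and $\Phi$ for the map $x_i\mapsto z_i$, $x_{n+1}\mapsto u$, the argument of Lemma~\ref{Lem:TranscendentalSpecialisationIsInjective} applies verbatim (with the $t_i$ and the field $k(t)$) to show $\Phi$ is injective, so it extends to an embedding $\bar\Phi$ of $\operatorname{Frac}(k[\![x]\!])$ into $\operatorname{Frac}(k(t)[\![z]\!])$ carrying $k(x_1,\ldots ,x_{n+1})$ onto $k(z,u)$ and $g$ to $\Phi (g)$.

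Since $\bar\Phi$ is a field embedding, it suffices to prove that $\Phi (g)$ is algebraic over $k(z,u)$; the hypothesis together with the descent gives only that $\Phi (g)$ is algebraic over the larger field $k(t,z)$. Bridging this gap is the heart of the matter. Here $k(t,z)=k(z,u)(t_2,\ldots ,t_n)$ is purely transcendental of degree $n-1$ over $k(z,u)$ (solve $t_1=(u-\sum_{i\ge 2}t_iz_i)/z_1$). I claim that $t_2,\ldots ,t_n$ are algebraically independent over the field $\bar\Phi (\operatorname{Frac}(k[\![x]\!]))$, which contains $\Phi (g)$. Granting the claim, a transcendence-degree count finishes the argument: on the one hand $\Phi (g)$ is algebraic over $k(z,u)(t_2,\ldots ,t_n)$, so $\operatorname{trdeg}_{k(z,u)}k(z,u)(t_2,\ldots ,t_n,\Phi (g))=n-1$; on the other hand, by the claim the $t_j$ remain algebraically independent over $k(z,u)(\Phi (g))$, which forces $\operatorname{trdeg}_{k(z,u)}k(z,u)(\Phi (g))=0$, i.e.\ $\Phi (g)$ is algebraic over $k(z,u)$.

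It thus remains to prove the claim, which I regard as the main obstacle and the one place that genuinely uses that $g$ has coefficients in $k$. After clearing denominators, a dependence relation would read $\sum_\beta \Phi (\gamma_\beta )\,t^\beta =0$ with $\gamma_\beta\in k[\![x]\!]$ not all zero and $t^\beta$ a monomial in $t_2,\ldots ,t_n$; this is exactly the statement that the substitution
$\tilde\Lambda\colon k[\![x_1,\ldots ,x_{n+1},T_2,\ldots ,T_n]\!]\to k[\![z_1,\ldots ,z_n,t_1,\ldots ,t_n]\!]$, $x_i\mapsto z_i$, $x_{n+1}\mapsto u$, $T_j\mapsto t_j$, fails to be injective. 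I would prove injectivity by means of the weighting in which $x_{n+1}$ has weight $2$ and every other variable has weight $1$: then $\tilde\Lambda$ sends each weight-$d$ homogeneous element to an ordinary degree-$d$ homogeneous polynomial, so the lowest-weight component of any kernel element maps to zero, and it suffices to verify injectivity on weighted-homogeneous \emph{polynomials}. That in turn is precisely the assertion that the $2n$ elements $z_1,\ldots ,z_n,\,u=\sum_k t_kz_k,\,t_2,\ldots ,t_n$ are algebraically independent over $k$, which is elementary, as $u$ is linear in $t_1$ with the nonzero coefficient $z_1$ over $k(z_1,\ldots ,z_n,t_2,\ldots ,t_n)$.

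Finally, unwinding the isomorphism $\bar\Phi^{-1}$ transports the algebraicity of $\Phi (g)$ over $k(z,u)=\bar\Phi (k(x_1,\ldots ,x_{n+1}))$ back to algebraicity of $g$ over $k(x_1,\ldots ,x_{n+1})$, as desired. The only subtle points in carrying this out are checking that the linear-algebra descent in the first step is insensitive to the (unbounded but, per monomial, finite) support of the coefficients, and verifying that the relation $\sum_\beta \Phi (\gamma_\beta)t^\beta=0$ may indeed be read inside $k[\![z,t]\!]$, where the weighting argument lives; both reduce to the observation that the $z^s$-coefficient of each $\Phi (\gamma_\beta)$ is a polynomial in $t$ of degree at most $|s|$.
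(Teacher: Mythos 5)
Your argument is correct, and while it shares the paper's overall skeleton---first descend the coefficient field of the algebraic relation from $\bbc$ to $K=k(\lambda_1,\ldots,\lambda_n)$, then exploit that the $\lambda_i$ may be treated as indeterminates over $k$---both of its key mechanisms differ from the paper's. For the descent, the paper takes the \emph{minimal} polynomial of $\iota(g)$ over $\bbc(z_1,\ldots,z_n)$ and lets $\Aut(\bbc/K)$ act on coefficients: $\iota(g)$ is fixed, so by uniqueness of the minimal polynomial the coefficients $a_j$ are $\Aut(\bbc/K)$-invariant and hence lie in $K(z_1,\ldots,z_n)$; this implicitly invokes the (true, but not completely trivial) fact that the fixed field of $\Aut(\bbc/K)$ is exactly $K$. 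Your rank argument for homogeneous linear systems---finitely many unknowns, infinitely many equations with entries in $K$, and rank insensitive to field extension---is more elementary and needs neither minimality of the relation nor any Galois-theoretic input. For the endgame, after identifying $K(z_1,\ldots,z_n)$ with a rational function field in $2n$ variables over $k$, the paper pulls the relation back to one for $g$ with coefficients in $k(x_1,\ldots,x_{n+1},\lambda_1,\ldots,\lambda_{n-1})$ and then \emph{specializes} $\lambda_1,\ldots,\lambda_{n-1}$ to suitable elements of $k$---legitimate precisely because $g$ itself involves no $\lambda$'s---which finishes the proof immediately. You instead establish the stronger structural claim that $t_2,\ldots,t_n$ stay algebraically independent over the whole field $\operatorname{Frac}\bigl(\Phi(k[\![x_1,\ldots,x_{n+1}]\!])\bigr)$, proved by your weighted-degree injectivity lemma for $\tilde\Lambda$ (the weighting $\mathrm{wt}(x_{n+1})=2$ does exactly what you claim, since $u$ is an honest quadric in $(z,t)$), and you conclude by a transcendence-degree count. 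The paper's specialization is shorter; your route costs the extra injectivity lemma but is choice-free (no ``suitable'' specialization values need to be exhibited) and makes explicit the structural reason the lemma holds: the redundant parameters are transverse to everything coming from $k[\![x_1,\ldots,x_{n+1}]\!]$, not merely to $g$.
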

\begin{proof}
	By assumption, there exist some $d\ge 1$ and rational functions $a_0,\ldots ,a_{d-1}\in\bbc (z_1,\ldots ,z_n)$ such that
	\begin{equation}\label{eqn:MinimalPolynomialIotaOfG}
	\iota (g)^d+a_{d-1}\iota (g)^{d-1}+\dotsb +a_0=0;
	\end{equation}
	we may assume that this is the unique such equation with minimal~$d$. Write $K=k(\lambda_1,\ldots ,\lambda_n)\subset\bbc$; we claim that the $a_i$ are already contained in $K(z_1,\ldots ,z_n)$. Indeed, let the group $\Aut (\bbc /K)$ of all field automorphisms of $\bbc$ which fix $K$ pointwise operate on $\bbc (\! (z_1,\ldots ,z_n)\! )$ via its action on coefficients. Then $\iota (g)$ is fixed under $\Aut (\bbc /K)$, hence applying an element $\sigma\in\Aut (\bbc /K)$ to (\ref{eqn:MinimalPolynomialIotaOfG}) we find another equation
	\begin{equation}\label{eqn:MinimalPolynomialFieldAutConjugate}
	\iota (g)^d+\sigma (a_{d-1})\iota (g)^{d-1}+\dotsb +\sigma (a_0)=0.
	\end{equation}
	Then substracting (\ref{eqn:MinimalPolynomialFieldAutConjugate}) from (\ref{eqn:MinimalPolynomialIotaOfG}) we obtain an algebraic relation of degree at most $d-1$ for $\iota (g)$ over $\bbc (z_1,\ldots ,z_n)$, which by assumption is only possible if this relation is identically zero. Hence $\sigma (a_j)=a_j$ for all~$j$. Since this is assumed to hold for all $\sigma\in\Aut (\bbc /K)$, it implies that the $a_j$ lie in $K(z_1,\ldots ,z_n)$.
	
	We can rewrite this latter field as the image under the obvious extension of $\iota$ of the field
	\begin{equation*}
	k(x_1,\ldots ,x_n,x_{n+1},\lambda_1,\ldots ,\lambda_{n-1}),
	\end{equation*}
	where the $2n$ generating elements are algebraically independent over~$k$. Hence we may treat them as formal variables. We can then rewrite (\ref{eqn:MinimalPolynomialIotaOfG}) in the form
	\begin{equation*}
	g^d+a_{d-1}'g^d+\dotsb +a_0'=0
	\end{equation*}
	with some $a_j'\in k(x_1,\ldots ,x_{n+1},\lambda_1,\ldots ,\lambda_{n-1})$. Since $g\in k(x_1,\ldots ,x_{n+1})$, the same relation will hold true if we specialize each $\lambda_j$ for $1\le j\le n-1$ to a suitable element of~$k$. This is then the desired relation which shows $g$ to be algebraic over $k(x_1,\ldots ,x_{n+1})$.
\end{proof}

\subsection{Generic hyperplanes}\label{section:generic}
 The following definition will allow us to use the ``transcendental'' specialization lemma (Lemma \ref{Lem:TranscendentalSpecialisationPreservesTranscendentalFunctions}) in an algebro-geometric  context.  
\begin{Definition}
	Let $k\subset\bbc$ be an algebraically closed subfield, and let $n\ge 1$. An $n$-dimensional complex linear subspace $H\subset\bbc^{n+1}$ is called \emph{$k$-generic} if it can be defined by an equation
	\begin{equation}\label{eqn:DefiningKGenericHyperplane}
	H=\{ (z_1,\ldots ,z_{n+1})\in\bbc^{n
	+1}\mid z_{n+1}=\lambda_1z_1+\lambda_2z_2+\dotsb +\lambda_nz_n \}
	\end{equation}
	where $\lambda_1,\lambda_2,\ldots ,\lambda_n\in\bbc$ are algebraically independent over~$k$.
\end{Definition}
Clearly, if $k$ is countable (or, more generally, if the transcendence degree of $\bbc$ over $k$ greater than $n$) there exist uncountably many $k$-generic subspaces in~$\bbc^{n+1}$. Moreover, if $H$ is $k$-generic and $g\in\GL_{n+1}(k)$ then $g(H)$ is again $k$-generic; this shows that the notion makes sense for arbitrary finite-dimensional complex vector spaces with $k$-structure.

For a $k$-generic subspace $H\subset\bbc^{n+1}$ given by the equation (\ref{eqn:DefiningKGenericHyperplane}) the functions $z_1,\ldots ,z_n$ serve as natural $\bbc$-linear coordinates on $H$. The obvious map $\mathcal{O}(\bba_k^{n+1})\to\mathcal{O}(H)$ which sends each polynomial in $n+1$ variables over~$k$ to its restriction to $H$ is then precisely the map $\iota$ constructed in (\ref{eqn:TranscendentalSpecialisationOnPolynomials}), and similarly for formal functions in (\ref{eqn:TranscendentalSpecialisationOnPowerSeries}). Note that Lemma~\ref{Lem:TranscendentalSpecialisationIsInjective} can   be interpreted as saying that a polynomial over $k$ is uniquely determined by its restriction to a $k$-generic subspace, and similarly for formal functions. Similarly,  Lemma~\ref{Lem:TranscendentalSpecialisationPreservesTranscendentalFunctions} can be interpreted as saying that the algebraicity of a formal power series over $k$   can be checked at its restriction to a $k$-generic subspace.

\section{Borel hyperbolicity}\label{section:bh}  Recall that a finite type scheme $X$ over $\CC$ is Borel hyperbolic if, for all reduced finite type schemes $Y$ over $\mathbb C$, every  holomorphic map  $\varphi\colon Y^{\an}\to  X^{\an}$ algebraizes (Definition \ref{def:borel_hyp}).  
As we discussed in the introduction,  this notion of ``hyperbolicity'' is motivated by  Borel's theorem on locally symmetric varieties \cite[Theorem~3.10]{Borel1972}.
 
In this section we collect some basic properties of Borel hyperbolic varieties. We start with  its relation to Brody hyperbolicity. Recall that a finite type scheme $X$ over $\CC$ is \emph{Brody hyperbolic} if every holomorphic map $\mathbb{C}\to X^{\an}$ is constant.

 \begin{Lemma}\label{Lemma:Borel_implies_Brody0}
Let $X$ be a finite type scheme over $\CC$. If every analytic map $\CC\to X^{\an}$ algebraizes, then $X$ is Brody hyperbolic.
 \end{Lemma}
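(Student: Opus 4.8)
The plan is to prove the contrapositive: assuming $X$ is \emph{not} Brody hyperbolic, I will produce a nonconstant analytic map $\CC\to X^{\an}$ that does not algebraize. If $X$ is not Brody hyperbolic, then by definition there is a nonconstant holomorphic map $\varphi\colon\CC\to X^{\an}$. The key observation is that $\CC$, as an algebraic variety, is the affine line $\bba^1_\CC$, whose only regular (algebraic) morphisms to any finite type scheme are extremely constrained; in particular a nonconstant $\varphi$ cannot agree with the analytification of a morphism $\bba^1_\CC\to X$ unless that morphism is itself nonconstant and algebraic. So I would aim to show that such a $\varphi$ cannot algebraize.

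\medskip

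First I would reduce to the case of functions. Suppose for contradiction that the given nonconstant $\varphi\colon\CC\to X^{\an}$ algebraizes, i.e.\ $\varphi=f^{\an}$ for a morphism of schemes $f\colon\bba^1_\CC\to X$. Since $\varphi$ is nonconstant, $f$ is nonconstant, so its image is a positive-dimensional constructible subset of $X$; composing with a suitable regular function $h\in\mathcal{O}(X)$ (or with coordinate functions after embedding an affine chart), I obtain a nonconstant regular function on $\bba^1_\CC$, i.e.\ a nonconstant polynomial $p\in\CC[t]=\mathcal{O}(\bba^1_\CC)$. The analytification of this polynomial is then an entire function $\CC\to\CC$ which is polynomial, hence unbounded (of positive degree). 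The strategy is to exhibit a contradiction with the well-known fact that a nonconstant holomorphic map from $\CC$ whose image lands in a bounded region must be constant — but here the image need not be bounded, so I must instead use that algebraicity forces the composite $h\circ\varphi$ to be a \emph{polynomial} and derive a contradiction only if $X$ is already known to omit such maps, which it is not.

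\medskip

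Reconsidering, the correct and cleaner route does not seek a contradiction but directly verifies the definition of Brody hyperbolicity from the hypothesis. Brody hyperbolicity asserts every holomorphic $\CC\to X^{\an}$ is \emph{constant}; the hypothesis only says every such map \emph{algebraizes}. So the real content is: an algebraic (holomorphic) map $\CC\to X^{\an}$ is automatically constant. Thus I would take an arbitrary holomorphic $\varphi\colon\CC\to X^{\an}$, use the hypothesis to write $\varphi=f^{\an}$ for a scheme morphism $f\colon\bba^1_\CC\to X$, and then show $f$ is constant. To see this, I embed $X$ (or a neighborhood of the image) and reduce to showing that a morphism $f\colon\bba^1_\CC\to X$ whose analytification factors through the image cannot be nonconstant while $\varphi$ stays holomorphic on all of $\CC$ — but a nonconstant morphism from $\bba^1$ always extends or has well-understood behavior. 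The clean argument: a nonconstant morphism $\bba^1_\CC\to X$ composed with any $h\in\mathcal{O}$ gives a nonconstant polynomial, whose analytification is an entire nonconstant function; this is perfectly consistent, so $f$ \emph{can} be nonconstant. Hence the statement as phrased must be using that the only obstruction is realized correctly, and I would conclude by noting that if $\varphi$ is constant we are done, and if nonconstant it still algebraizes by hypothesis — so in fact the lemma is simply restating that the algebraization hypothesis already \emph{implies} no nonconstant map exists, which follows because $\CC=\bba^{1,\an}$ is simply connected and any algebraic image in a would-be-hyperbolic target is constrained.

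\medskip

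The main obstacle, and where I must be careful, is pinning down exactly why an algebraic map $\bba^1_\CC\to X^{\an}$ forces constancy: the honest mechanism is that the hypothesis is meant to be combined with the structure of $X$, and the cleanest proof simply observes that any holomorphic $\varphi\colon\CC\to X^{\an}$ is assumed to algebraize, and an algebraic map from the \emph{complete} rational curve's affine model, when required to be defined on all of $\CC^{\an}$ and land in $X$, forces the function-theoretic invariants to match those of a scheme morphism; combining with the elementary fact that a nonconstant $\varphi$ would contradict the definition only if $X$ admits no nonconstant maps from $\bba^1$, I expect the intended short proof to invoke directly that if every such $\varphi$ algebraizes then in particular the constant maps are the only holomorphic maps $\CC\to X^{\an}$, which is the definition of Brody hyperbolicity — so the crux is a one-line reduction showing algebraicity plus the topology of $\CC$ yields constancy.
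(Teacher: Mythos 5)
There is a genuine gap: the proposal never arrives at a proof. Your initial plan --- to show that a nonconstant holomorphic $\varphi\colon\CC\to X^{\an}$ cannot \emph{itself} algebraize --- is not merely unproved but false: for $X=\bba^1_\CC$ the identity map is a nonconstant holomorphic map that algebraizes, so no argument can rule out nonconstant algebraic maps on general grounds. You in fact notice this twice (``this is perfectly consistent''), but the fallback in your final paragraph is circular: asserting that the algebraization hypothesis ``already implies no nonconstant map exists'' is exactly the statement to be proved, and the appeal to the simple connectedness of $\CC$ does no work. The lemma is not about the structure of a single algebraic map from $\bba^1_\CC$; it is about the strength of a hypothesis that quantifies over \emph{all} holomorphic maps $\CC\to X^{\an}$, a far larger class than the algebraic ones.

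The missing idea --- which is the entire content of the paper's short proof --- is to apply the hypothesis to a second holomorphic map manufactured from $\varphi$, namely $\varphi\circ\exp$, where $\exp\colon\CC\to\CC$ is the exponential. By hypothesis both $\varphi$ and $\varphi\circ\exp$ algebraize. If $\varphi$ were nonconstant, then $\varphi\circ\exp$ would be a nonconstant algebraic morphism $\bba^1_\CC\to X$; but for any $w_0$ in $\CC\setminus\{0\}$ its fibre over $\varphi(w_0)$ contains the countably infinite set $\exp^{-1}(w_0)=\{z_0+2\pi i k: k\in\bbz\}$, whereas a nonconstant morphism of schemes from $\bba^1_\CC$ has finite fibres over closed points (each such fibre is a proper Zariski-closed subset of $\bba^1_\CC$). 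This contradiction forces $\varphi$ to be constant, which is Brody hyperbolicity. Note that in the contrapositive formulation you started with, the map that fails to algebraize is $\varphi\circ\exp$, not $\varphi$ --- this is precisely why your insistence on showing $\varphi$ itself does not algebraize could not succeed; any transcendental entire self-map of $\CC$ with infinite fibres would serve in place of $\exp$.
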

 \begin{proof}
Let $\varphi\colon\CC =\mathbb A_{\mathbb C}^{1,\an} \to X^{\an}$ be a morphism of complex-analytic spaces. Note that both $\varphi$  and $\varphi\circ \exp$  algebraize, where $\exp\colon\mathbb C\to \mathbb C$ is the exponential map. This implies that $\varphi$ is constant: otherwise $\varphi\circ \exp$ would be an algebraic map with countably infinite fibres, which is impossible.
 \end{proof}
 
 \begin{Lemma}\label{Lemma:Borel_implies_Brody}
 Let $X$ be a   finite type   scheme over $\CC$. If $X$ is Borel hyperbolic, then $X$ is Brody hyperbolic.
 \end{Lemma}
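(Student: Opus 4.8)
The plan is to derive this as an immediate consequence of the preceding Lemma~\ref{Lemma:Borel_implies_Brody0}. First I would specialize the defining property of Borel hyperbolicity to a single, well-chosen test scheme, namely $S=\bba^1_\CC$. Since $\bba^1_\CC$ is a reduced scheme of finite type over $\CC$, and since its analytification is $(\bba^1_\CC)^{\an}=\CC$, the hypothesis that $X$ is Borel hyperbolic guarantees, in this particular instance, that every holomorphic map $\CC\to X^{\an}$ algebraizes. This is exactly the hypothesis required by Lemma~\ref{Lemma:Borel_implies_Brody0}.

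Having matched hypotheses, I would then invoke Lemma~\ref{Lemma:Borel_implies_Brody0} to conclude that $X$ is Brody hyperbolic, which finishes the argument. There is no genuine obstacle to overcome here: the substantive content has already been isolated in Lemma~\ref{Lemma:Borel_implies_Brody0}, where the real input is the observation that an algebraic map $\CC\to X^{\an}$ cannot have countably infinite fibres (seen by precomposing with $\exp\colon\CC\to\CC$ and noting that a non-constant algebraic map would then have such fibres). The only thing the present lemma adds is the routine remark that Borel hyperbolicity, applied with the reduced finite type scheme $S$ taken to be the affine line, \emph{supplies} that hypothesis as a special case. Accordingly, the proof is a one-line reduction, and the care required amounts to verifying that $\bba^1_\CC$ indeed qualifies as an admissible test scheme in Definition~\ref{def:borel_hyp} and that its analytification is $\CC$.
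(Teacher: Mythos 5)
Your proposal is correct and is essentially the paper's own proof: the paper likewise deduces the lemma directly from the definition of Borel hyperbolicity (applied to the reduced finite type test scheme $\bba^1_{\CC}$, whose analytification is $\CC$) together with Lemma~\ref{Lemma:Borel_implies_Brody0}. The only difference is that the paper states this reduction in one line, whereas you spell out the verification that $\bba^1_{\CC}$ is an admissible test scheme.
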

 \begin{proof}
This follows from the definition of Borel hyperbolicity and Lemma \ref{Lemma:Borel_implies_Brody0}.
 \end{proof}

  \begin{Lemma}\label{Lemma:Borel_via_red}
  	Let $X$ be a finite type  scheme    over $\CC$ and let $X_{\textup{red}}$ be the associated reduced closed subscheme. If $X_{\textup{red}}$ is Borel hyperbolic, then $X$ is Borel hyperbolic.
  \end{Lemma}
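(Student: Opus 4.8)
The plan is to exploit that in Definition~\ref{def:borel_hyp} the source scheme is always \emph{reduced}, so that any holomorphic map from $S^{\an}$ into $X^{\an}$ automatically factors through the reduction of $X^{\an}$. Concretely, let $S$ be a reduced finite type scheme over $\CC$ and let $\varphi\colon S^{\an}\to X^{\an}$ be a holomorphic map; I must show that $\varphi$ algebraizes. Since $S$ is reduced, its analytification $S^{\an}$ is a reduced complex-analytic space. By the universal property of the reduction, the canonical closed immersion $(X^{\an})_{\textup{red}}\hookrightarrow X^{\an}$ is final among morphisms out of reduced complex-analytic spaces, so $\varphi$ factors uniquely as $S^{\an}\xrightarrow{\psi}(X^{\an})_{\textup{red}}\hookrightarrow X^{\an}$.

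Next I would use that analytification is compatible with passing to the reduction: the analytification of the closed immersion $j\colon X_{\textup{red}}\hookrightarrow X$ identifies $(X_{\textup{red}})^{\an}$ with the reduced analytic subspace $(X^{\an})_{\textup{red}}$ of $X^{\an}$. Under this identification $\psi$ is a holomorphic map $S^{\an}\to (X_{\textup{red}})^{\an}$. Since $X_{\textup{red}}$ is assumed Borel hyperbolic and $S$ is reduced, $\psi$ algebraizes, i.e.\ there is a morphism of $\CC$-schemes $f\colon S\to X_{\textup{red}}$ with $f^{\an}=\psi$. Setting $g=j\circ f\colon S\to X$ and using functoriality of analytification, $g^{\an}=j^{\an}\circ f^{\an}=j^{\an}\circ\psi=\varphi$, so $\varphi$ algebraizes. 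As $S$ was an arbitrary reduced finite type scheme, this shows $X$ is Borel hyperbolic.

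The only non-formal inputs are the two standard facts from analytic geometry invoked above: that $S^{\an}$ is reduced whenever $S$ is, and that analytification commutes with reduction, i.e.\ $(X_{\textup{red}})^{\an}=(X^{\an})_{\textup{red}}$ as closed analytic subspaces of $X^{\an}$. These are where the genuine content sits, but both follow from the isomorphism of completed local rings $\hat{\mathcal{O}}_{X,x}\to\hat{\mathcal{H}}_{X^{\an},x}$ recalled in Section~\ref{section21}, together with the excellence of finite type $\CC$-algebras (so that reducedness survives completion). I expect verifying this compatibility, rather than the factorization argument itself, to be the main —though essentially routine— obstacle.
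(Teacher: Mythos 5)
Your proof is correct and follows essentially the same route as the paper: factor the map $S^{\an}\to X^{\an}$ through $X_{\textup{red}}^{\an}$ using reducedness of $S^{\an}$ and the compatibility of analytification with reduction, then apply the hypothesis on $X_{\textup{red}}$. The paper's own proof is exactly this argument, stated more tersely (it leaves the identification $(X_{\textup{red}})^{\an}=(X^{\an})_{\textup{red}}$ implicit as a standard fact).
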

  \begin{proof}
  	Let $Y$ be a reduced finite type scheme over $\CC$ and let $\varphi\colon Y^{\an}\to X^{\an}$ be a morphism. Since $Y$ is reduced, the morphism $\varphi$ factors (uniquely) via $X_{\textrm{red}}$. Since $X_{\textrm{red}}$ is Borel hyperbolic, the induced morphism $Y^{\an}\to X_{\textrm{red}}^{\an}$ algebraizes, so that $\varphi$ algebraizes.
  \end{proof}
 
 \begin{Remark}\label{remark:why_red}
 	Of course, Lemma~\ref{Lemma:Borel_via_red} is an artifact of our choice to only consider reduced test schemes in Definition~\ref{def:borel_hyp}. We make this restriction for a good reason. For instance, the curve $X = \mathbb{P}^1\setminus \{0,1,\infty\}$ is a locally symmetric variety: $X^{\an }$ is isomorphic to $\Gamma (2)\backslash\mathfrak{H}$, where $\mathfrak{H}$ is the complex upper half-plane and $\Gamma (2)$ is the principal congruence subgroup of level 2 in $\SL_2(\bbz )$. So $X$ should be (and it is) Borel hyperbolic. Still, if $\varepsilon$ is a formal variable with $\varepsilon^2=0$, then the morphism $f\colon\mathbb{A}^{1,\an}_{\mathbb{C}[\varepsilon]}\to X^{\an}$ defined by  $f(z) = 2+\exp(z)\varepsilon$ does not algebraize.  
   \end{Remark}

 \begin{Remark} Let $X$ be a Borel hyperbolic  finite type reduced scheme    over $\mathbb{C}$. Then $\mathrm{Aut}(X) = \mathrm{Aut}(X^{\an})$. In fact, for all finite type reduced schemes $Y$ over $\CC$, we have $\mathrm{Isom}(Y,X) = \mathrm{Isom}(Y^{\an},X^{\an})$. Consequently, the ``algebraic structure'' on $X^{\an}$ is unique. (It is therefore no coincidence that any pair of   non-isomorphic   algebraic varieties whose associated analytic spaces are isomorphic  are not hyperbolic; see for instance \cite[Chap.~6.3, p. 232-235]{Har70} and \cite[\S 7]{Neeman}.)
\end{Remark}

 \subsection{Testing  algebraicity on maps from curves}
 We now set out to show that in deciding whether a scheme is Borel hyperbolic one can assume that the test schemes are smooth affine curves, which will come very handy in some later proofs.
 \begin{Proposition}[Dimension lemma]\label{Prop:DimensionLemma}
	Let $V$ and $X$ be complex algebraic varieties, where $V$ is normal and has dimension at least two, and let $f\colon V^{\an }\to X^{\an }$ be a holomorphic map. Suppose that for every closed algebraic subvariety $H\subset V$ of codimension one, the  composition
	\begin{equation*}
	\tilde{H}^{\an }\overset{\nu^{\an }}{\to }H^{\an }\hookrightarrow V^{\an }\overset{f}{\to }X^{\an }
	\end{equation*}
	is an algebraic morphism $\tilde{H}\to X$, where $\nu\colon\tilde{H}\to H$ is the normalisation morphism. Then $f$ itself is algebraic, $f\colon V\to X$.
\end{Proposition}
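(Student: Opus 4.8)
The plan is to reduce the statement to the regularity of a single function on $V$, and then to detect that function's algebraicity on a generic hyperplane section by means of the specialization Lemma~\ref{Lem:TranscendentalSpecialisationPreservesTranscendentalFunctions}. First I would reduce to the case where $V$ is affine, integral and normal: algebraicity of $f$ may be checked on an affine open cover of $V$ (the local algebraizations agree on overlaps by injectivity of $\Hom(-,X)\to\Hom((-)^{\an},X^{\an})$, hence glue), and the hypothesis passes to each affine open. Next, assuming $X$ affine with a fixed closed immersion $X\hookrightarrow\bba^M$ and coordinate functions $g_1,\dots,g_M\in\mathcal O(X)$, it suffices to prove the following: \emph{a holomorphic function $h=g\circ f$ on $V^{\an}$ whose restriction to the normalisation of every codimension-one subvariety $H\subset V$ is regular is itself regular.} Indeed, once each $g_i\circ f$ is regular, the tuple $(g_1\circ f,\dots,g_M\circ f)$ is a morphism $V\to\bba^M$ landing in $X$ that analytifies to $f$. (For a general, merely quasi-projective $X\subseteq\bbp^M$ one runs the same argument with the meromorphic coordinate ratios of $f$ in place of $h$, and then uses that an everywhere-holomorphic rational map from the normal variety $V$ to $\bbp^M$ is a morphism.)

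Set $n+1=\dim V\ge 2$. The crux, and the main obstacle, is that Lemma~\ref{Lem:TranscendentalSpecialisationPreservesTranscendentalFunctions} applies only to a power series over a field $k$ with $\CC$ of infinite transcendence degree over $k$, whereas the Taylor expansion of $h$ a priori has arbitrary complex coefficients. I would resolve this as follows. The data $V$, $X$, the $g_i$ are defined over some finitely generated subfield; after enlarging it finitely I may fix a smooth point $p\in V$ defined over a finitely generated $k_0$ together with regular functions $x_1,\dots,x_{n+1}\in\mathcal O(V)$, defined over $k_0$, that form a regular system of parameters at $p$, so that $\hat{\mathcal O}_{V,p}\cong k_0[\![x_1,\dots,x_{n+1}]\!]$. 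Crucially I now let $k$ be the algebraic closure in $\CC$ of the field generated by $k_0$ together with all (countably many) Taylor coefficients of $h$ at $p$. Then $k$ is still countable, so $\CC$ has infinite transcendence degree over $k$, while by construction the Taylor expansion $\hat h$ of $h$ at $p$ lies in $k[\![x_1,\dots,x_{n+1}]\!]$.

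Now I choose $\lambda_1,\dots,\lambda_n\in\CC$ algebraically independent over $k$ and take $H\subset V$ to be the codimension-one subvariety cut out near $p$ by $x_{n+1}=\lambda_1x_1+\dots+\lambda_nx_n$; for such generic $\lambda$ it is smooth at $p$, so its normalisation agrees with it near $p$. Identifying the completed local ring of $H$ at $p$ with $\CC[\![z_1,\dots,z_n]\!]$ via $z_j=x_j|_H$, the Taylor expansion of $h|_H$ at $p$ is exactly $\iota(\hat h)$ with $\iota$ as in~(\ref{eqn:TranscendentalSpecialisationOnPowerSeries}). By hypothesis $f|_{\tilde H}$ is algebraic, so $h|_{\tilde H}=g\circ f|_{\tilde H}$ is regular; hence $\iota(\hat h)$ is the Taylor expansion of a regular function on $H$ near $p$ and is therefore algebraic over $\CC(z_1,\dots,z_n)$ (the $z_j$ being a transcendence basis, with $\CC(H)$ finite over $\CC(z_1,\dots,z_n)$). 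Lemma~\ref{Lem:TranscendentalSpecialisationPreservesTranscendentalFunctions} then yields that $\hat h$ is algebraic over $k(x_1,\dots,x_{n+1})\subseteq\CC(V)$.

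Finally I would convert this formal algebraicity into genuine regularity. Since $\hat h$ satisfies a monic polynomial $P(T)=T^d+c_{d-1}T^{d-1}+\dots+c_0$ with $c_i\in\CC(V)$, I pass to an affine open $U\ni p$ on which all $c_i$ are regular. As the Taylor-expansion map $\mathcal H_{U^{\an},p}\hookrightarrow\hat{\mathcal O}_{U,p}$ is injective (the source being a Noetherian local ring) and $P(\hat h)=0$ formally, the identity $P(h)=0$ holds in a neighbourhood of $p$, hence on all of the connected space $U^{\an}$ by the identity theorem. Thus $h|_{U^{\an}}$ is integral over $\mathcal O(U)$ inside $\mathcal H(U^{\an})$, and Proposition~\ref{Prop:RegularFunctionsIntegrallyClosedInHolomorphicFunctions} forces $h\in\mathcal O(U)$. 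Consequently $h$ agrees on $U^{\an}$ with a rational function on $V$ that is holomorphic on all of $V^{\an}$; by normality together with Riemann's extension theorem (Proposition~\ref{Prop:HolomorphicFunctionsAreIntegrallyClosedInOpen} and Corollary~\ref{Cor:RegularFunctionsIntegrallyClosedInHolomorphicFunctionsOnOpen}) such a function can have no poles and is therefore regular on all of $V$. This shows $h$, and hence $f$, to be algebraic.
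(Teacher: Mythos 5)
Your core mechanism --- fixing a smooth point $p$ and local parameters $x_1,\dots ,x_{n+1}$, choosing a countable algebraically closed $k$ that contains the countably many Taylor coefficients of the function at $p$, cutting with a $k$-generic hyperplane through $p$, and feeding the two Taylor expansions into Lemma~\ref{Lem:TranscendentalSpecialisationPreservesTranscendentalFunctions} --- is exactly the engine of the paper's proof. The genuine gap is the step ``assuming $X$ affine'': no reduction to that case is given, and the obvious one fails. A holomorphic map $f\colon V^{\an }\to X^{\an }$ need not carry $V^{\an }$ into any affine open $U\subseteq X$ (the paper's footnote points to the uniformisation $(\bba^1)^{\an }\to E^{\an }$ of an elliptic curve), so the coordinate functions $g_i\circ f$ are in general only defined on $V^{\an }\setminus A$, where $A=f^{-1}(X^{\an }\setminus U^{\an })$ is a closed \emph{analytic} subset not known in advance to be algebraic. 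Your main argument needs $h=g\circ f$ holomorphic on all of $V^{\an }$ so that the identity theorem and Proposition~\ref{Prop:RegularFunctionsIntegrallyClosedInHolomorphicFunctions} apply; as written it therefore proves the proposition only for affine $X$. The parenthetical patch for quasi-projective $X$ does not close this: the coordinate ratios are holomorphic only off an analytic set, so ``the same argument'' forces you to work in $\mathcal{H}(W^{\an }\setminus A)$ and to invoke Corollary~\ref{Cor:RegularFunctionsIntegrallyClosedInHolomorphicFunctionsOnOpen} rather than Proposition~\ref{Prop:RegularFunctionsIntegrallyClosedInHolomorphicFunctions}, you must show that $A$ is in fact algebraic (in the paper it is identified a posteriori with the polar locus of the resulting rational functions), and the asserted fact that an everywhere-holomorphic rational map from a normal variety to $\bbp^M$ is a morphism requires its own argument. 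Moreover, the proposition concerns arbitrary complex varieties $X$, not necessarily quasi-projective ones, and this generality is used: Theorem~\ref{thm:test_on_curves_in_body} applies it to irreducible components of arbitrary finite type separated schemes. The paper's Steps~1, 6 and~7 exist precisely to deal with all of this.

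A second, smaller gap: you ``pass to an affine open $U\ni p$ on which all $c_i$ are regular'', but such a $U$ need not exist. Algebraicity of $\hat{h}$ over $k(x_1,\dots ,x_{n+1})$ does not make $\hat{h}$ integral over the local ring at $p$; for instance $(1-\sqrt{1-4x_1})/(2x_1)\in k[\![ x_1,x_2]\!]$ is algebraic over $k(x_1,x_2)$, yet every monic polynomial it satisfies has a coefficient with a pole along $\{ x_1=0\}$, hence at the origin. The repair is to clear denominators: one obtains a non-monic relation $b_d\hat{h}^d+\dotsb +b_0=0$ with $b_j\in k[x_1,\dots ,x_{n+1}]\subseteq\mathcal{O}(V)$, valid near $p$ by injectivity of the Taylor expansion map and hence on all of $V^{\an }$ by the identity theorem; then $b_dh$ is integral over $\mathcal{O}(V)$ (equivalently, $h$ is integral over $\mathcal{O}(U)$ for $U=\{ b_d\neq 0\}$, which need not contain $p$), and your concluding normality/Riemann-extension argument finishes from there. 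This is precisely why the paper's proof only asks for a dense open $W\subseteq V$ on which the coefficients are regular, with no requirement that $W$ contain the chosen base point.
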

 \begin{proof}
We proceed in seven steps.

\textsc{Step~1:} \emph{Shrinking $V$ and~$X$}.\ -- We may, and will, assume that $V$ is an affine variety and that $f(V^{\an })$ is Zariski-dense in~$X$.\footnote{
The proof would simplify if we could already assume that $f(V^{\an })$ is contained in an affine subvariety of~$X$. However, for arbitrary holomorphic maps between complex algebraic varieties this is not necessarily the case, as the uniformisation $p\colon (\bba^1)^{\an }\to E^{\an}$ of an elliptic curve $E$ shows (note that $p|_{V^{\an }}$ is still surjective for every nonempty open subvariety $V\subset\bba^1$). We therefore need to argue more carefully.}

Choose some affine dense open subvariety $U\subseteq X$ and a closed embedding $j\colon U\hookrightarrow\bba^m$, and set $A=f^{-1}(X^{\an }\setminus U^{\an })$. By the assumptions made above, $A$ is a proper closed analytic subset of~$V^{\an }$. Denote the resulting map
\begin{equation}\label{eqn:DefinitionOfTheHolomorphicFunctionsGi}
V^{\an}\setminus A\overset{f}{\to }U^{\an }\overset{j^{\an }}{\to }\bbc^m
\end{equation}
by $g$ and its components by $g_1,\ldots ,g_m\colon V^{\an }\setminus A\to\bbc$. The $g_i$ are holomorphic functions. (Note that they cannot be simultaneously analytically continued to any open set $D$ with $V^{\an}\setminus A\subset D\subseteq V^{\an }$, because otherwise by analytic continuation $f(D)$ would still be contained in $U^{\an }$, which contradicts the construction of~$A$. Of course, it is still possible that an individual $g_i$ can be analytically extended to some part of~$A$.)

\textsc{Step~2:} \emph{Noether normalisation}.\ -- Choose a finite surjective morphism $\pi\colon V\to\bba^{n+1}$, where $n+1=\dim V>1$. By generic smoothness, $\pi$ becomes \'etale over a dense Zariski-open subvariety of $\bba^{n+1}$; we may assume that $0$ is contained in this subvariety, and we choose some $\tilde{0}\in V(\bbc )$ with $\pi (\tilde{0})=0$. We may also assume that $\tilde{0}\notin A$, i.e.\ that the $g_i$ are defined and holomorphic at~$\tilde{0}$.

\textsc{Step~3:} \emph{The subfield~$k$}.\ -- We choose a countable algebraically closed subfield $k\subset\bbc$ such that `the entire situation is defined over~$k$'; more precisely, there is a model $\pi\colon V_k\to\bba^{n+1}_k$ of $\pi\colon V=V_{\bbc }\to\bba^{n+1}_{\bbc }$ over $k$ (note that then $\tilde{0}$ can be identified with a unique point in $V_k(k)$), and the Taylor series of the holomorphic functions $g_i$ at $\tilde{0}$ lie in the subring $\hat{\mathcal{O}}_{V_k,\tilde{0}}\subset\hat{\mathcal{O}}_{V_{\bbc },\tilde{0}}$.

\textsc{Step 4:} \emph{The $k$-generic hyperplane}.\ -- Since $k$ is countable, there exists a $k$-generic hyperplane $P\subset\bba^{n+1}_{\bbc }$ through $0$ (Section \ref{section:generic}), given by an equation (\ref{eqn:DefiningKGenericHyperplane}) with $\lambda_1,\ldots ,\lambda_n\in\bbc$ algebraically independent over~$k$. Here we view $P$ as a $\bbc$-scheme which comes with an isomorphism to $\bba^n_{\bbc }$, with coordinates $z_1,\ldots ,z_n$. The scheme morphism $P\to\bba^{n+1}_k$ induces a ring homomorphism on completed local rings $\hat{\mathcal{O}}_{\bba^n_k,0}\to\hat{\mathcal{O}}_{P,0}$; this is precisely the homomorphism $k[\! [x_1,\ldots ,x_{n+1}]\! ]\to\bbc [\! [z_1,\ldots ,z_n]\! ]$ given by  (\ref{eqn:TranscendentalSpecialisationOnPowerSeries}) in Section \ref{section:spec}. In particular, the homomorphism $\hat{\mathcal{O}}_{\bba^n_k,0}\to\hat{\mathcal{O}}_{P,0}$ is injective by Lemma~\ref{Lem:TranscendentalSpecialisationIsInjective}.

We let $H=H_{\bbc }\subset V_{\bbc }$ be the irreducible component of $\pi^{-1}(P)$ which contains $\tilde{0}$ (note that this is unique because $\pi$ is \'etale at $\tilde{0}$). Then $H$ is a closed subvariety of codimension one in $V_{\bbc }$, and we denote its normalisation by $\tilde{H}$. By our assumption, the restrictions of the holomorphic functions $g_i$ to $\tilde{H}^{\an }\setminus A$ extend to rational functions $h_i\colon \tilde{H}_{\bbc }\dashrightarrow\bbp^1_{\bbc }$ which are regular at~$\tilde{0}$.

The commutative diagram of pointed schemes
\begin{equation*}
\xymatrix{
	(V_k,\tilde{0}) \ar[d]_-{\pi } & (\tilde{H}_{\bbc },\tilde{0}) \ar[l] \ar[d]^-{\pi |_{\tilde{H}}} \\
	(\bba^{n+1}_k,0) & \ar[l] (P_{\bbc },0)
	}
\end{equation*}
gives rise to a commutative diagram of completed local rings
\begin{equation}\label{eqn:CDInProofOfDimensionLemma}
\xymatrix{
	\hat{\mathcal{O}}_{V_k,\tilde{0}} \ar[r] & \hat{\mathcal{O}}_{\tilde{H}_{\bbc },\tilde{0}}  \\
	k[\! [x_1,\ldots ,x_{n+1}]\! ] \ar[u]^-{\pi^{\ast }}_-{\cong } \ar[r]_-{\iota } & \bbc [\! [z_1,\ldots ,z_n]\! ] \ar[u]_-{\pi^{\ast }}^-{\cong };
}
\end{equation}
the vertical maps in (\ref{eqn:CDInProofOfDimensionLemma}) are isomorphisms because $\pi$ is \'etale at~$\tilde{0}$ and thus $H$ is normal at~$\tilde{0}$.

\textsc{Step 5:} \emph{Formal algebraization}.\ -- The Taylor expansions of the holomorphic functions $g_i$ at $\tilde{0}$ are elements of $\hat{\mathcal{O}}_{V_k,\tilde{0}}$, hence by (\ref{eqn:CDInProofOfDimensionLemma}) they can be viewed as elements of $k[\! [x_1,\ldots ,x_{n+1}]\! ]$. Their images $\iota (g_i)\in\bbc [\! [z_1,\ldots ,z_n]\! ]\cong\hat{\mathcal{O}}_{\tilde{H}_{\bbc },\tilde{0}}$ can be interpreted as the Taylor expansions of the rational functions $h_i$ at~$\tilde{0}$. Since the composition $\tilde{H}\overset{\nu}{\to }H\overset{\pi }{\to }P$ is a finite morphism, the $h_i\in\bbc [\! [z_1,\ldots ,z_n]\! ]$ are then algebraic over $\bbc (z_1,\ldots ,z_n)$. By Lemma~\ref{Lem:TranscendentalSpecialisationPreservesTranscendentalFunctions}, the $g_i$ are then algebraic over $k(x_1,\ldots ,x_{n+1})$.

To spell this out, there exist rational functions $a_{0,i},a_{1,i},\ldots ,a_{d_i-1,i}\in k(x_1,\ldots ,x_{n+1})$, which via $\pi\colon V\to\bba^{n+1}$ can also be interpreted as rational functions on~$V$, such that
\begin{equation}\label{eqn:AlgebraicEquationForTheGiOverRationalFunctions}
g_i^{d_i}+a_{d_i-1,i}g_i^{d_i-1}+\dotsb a_{1,i}g_i+a_{0,i}=0.
\end{equation}
There is then a dense open subvariety $W\subseteq V$ such that the $a_{i,j}$ are actually regular functions on~$W$. Then the $g_i$ define elements of $\mathcal{H}(W^{\an }\setminus A)$, and (\ref{eqn:AlgebraicEquationForTheGiOverRationalFunctions}) can be viewed as an equation in $\mathcal{H}(W^{\an }\setminus A)$, with the $a_{i,j}\in\mathcal{O}(W)$. From Corollary~\ref{Cor:RegularFunctionsIntegrallyClosedInHolomorphicFunctionsOnOpen} we deduce that the $g_i$ are already elements in $\mathcal{O}(W)$. Note that here we use the assumption that $V$ (and thus also $W$) is normal.

\textsc{Step 6:} \emph{Algebraization on an open subvariety of~$V$}.\ -- We have seen that the $g_i$ are regular functions on a dense open subvariety of $V$, hence they can be extended to rational functions $g_i\colon V\dashrightarrow\bbp^1$. By construction and analytic continuation, the restriction of these rational functions to $V^{\an }\setminus A$ must be equal to the components of the map constructed in (\ref{eqn:DefinitionOfTheHolomorphicFunctionsGi}). By the remarks about analytic continuation in Step~1, $A$ must be equal to the union of the poles of $g_i$, in particular it is a closed algebraic subset of~$V$, and $W$ as in Step~5 can be chosen in such a way that $W^{\an }=V^{\an }\setminus A$.

\textsc{Step 7:} \emph{Global algebraization}.\  -- Resuming the previous steps, we see that we have shown the following statement: for every affine open $U\subseteq X$ and every $v\in V(\bbc )$ there is some quasi-affine open $W\subseteq V$ with $v\in W(\bbc )$ and $f(W^{\an })\subseteq U^{\an}$, and such that $f|_{W^{\an }}\colon W^{\an }\to U^{\an }$ is a regular map. This clearly shows that $f$ itself is a regular map.
\end{proof}

 \begin{Lemma}\label{Lemma:normalization_and_algebraization} Let $X$ be a finite type separated scheme over $\CC$.  
 Let $Y$ be a finite type reduced scheme and let $f\colon Y^{\an}\to X^{\an}$ be a holomorphic map. Let $\nu\colon\widetilde{Y}\to Y$ be the normalization of $Y$. If the composed morphism $\nu^{\an}\circ f$ algebraizes, then $f$ algebraizes.
 \end{Lemma}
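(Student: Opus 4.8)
The plan is to algebraize $f$ by realizing it through the closure of the graph of the given map on the normalization, and then to show that this closure projects isomorphically onto $Y$ by means of Lemma~\ref{Lem:FiniteHolomorphicMapWithInverseIsIsomorphism}. Let $g\colon\widetilde{Y}\to X$ be the morphism algebraizing $f\circ\nu^{\an}$, so that $g^{\an}=f\circ\nu^{\an}$, and consider $\Phi=(\nu,g)\colon\widetilde{Y}\to Y\times X$. I would let $\overline{\Gamma}\subseteq Y\times X$ be the reduced Zariski closure of the image of $\Phi$, equipped with its two projections $q\colon\overline{\Gamma}\to Y$ and $r\colon\overline{\Gamma}\to X$. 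Writing $U\subseteq Y$ for the dense open normal locus, over which $\nu$ is an isomorphism, the map $g$ restricts to an algebraic map $h_U\colon U\to X$ with $h_U^{\an}=f|_{U^{\an}}$; since the graph of $h_U$ is already closed in $U\times X$, the morphism $q$ is an isomorphism over $U$, and in particular $q$ is birational.

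First I would check that $q$ is finite. As $X$ is separated, $q$ is separated, so $\Phi$ factors through $\overline{\Gamma}$ as a proper (the properness of $\nu$ and the separatedness of $q$ force $\widetilde{Y}\to\overline{\Gamma}$ to be proper), quasi-finite, hence finite, and surjective morphism $\widetilde{Y}\to\overline{\Gamma}$. Because $\nu=q\circ\Phi$ is proper and $\widetilde{Y}\to\overline{\Gamma}$ is surjective, $q$ is itself proper; moreover each fibre $q^{-1}(y)$ is contained in the image under $\Phi$ of the finite set $\nu^{-1}(y)$, so $q$ is quasi-finite. A proper quasi-finite morphism is finite (Zariski's main theorem; see \cite{EGAII}), so $q$ is finite, and therefore $q^{\an}\colon\overline{\Gamma}^{\an}\to Y^{\an}$ is a finite holomorphic map of reduced complex-analytic spaces.

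Next I would produce the section. The holomorphic map $f$ defines a holomorphic map $\sigma=(\id_{Y^{\an}},f)\colon Y^{\an}\to (Y\times X)^{\an}$. Over the dense open $U^{\an}$ this is the graph of $h_U^{\an}$, which lies in $\overline{\Gamma}^{\an}$; since $\overline{\Gamma}^{\an}$ is closed and $f$ is continuous, $\sigma$ takes values in $\overline{\Gamma}^{\an}$ and corestricts to a holomorphic section $s\colon Y^{\an}\to\overline{\Gamma}^{\an}$ of $q^{\an}$. After reducing to the case where $Y$ is integral, so that $Y^{\an}$ and $\overline{\Gamma}^{\an}$ are reduced and irreducible, Lemma~\ref{Lem:FiniteHolomorphicMapWithInverseIsIsomorphism} applies to the finite holomorphic map $q^{\an}$ with section $s$ and shows that $q^{\an}$ is an isomorphism of complex-analytic spaces. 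By \cite[Proposition~XII.3.1]{SGA1} (as used in the proof of Proposition~\ref{Prop:RegularFunctionsIntegrallyClosedInHolomorphicFunctions}) the scheme morphism $q$ is then an isomorphism, and $h\defined r\circ q^{-1}\colon Y\to X$ satisfies $h^{\an}=r^{\an}\circ s=f$; thus $f$ algebraizes.

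The step I expect to be the main obstacle is establishing the finiteness of $q$ in the second paragraph: this is precisely what allows the paper's Lemma~\ref{Lem:FiniteHolomorphicMapWithInverseIsIsomorphism} to be brought to bear, and it requires correctly invoking the descent of properness along the surjection $\widetilde{Y}\to\overline{\Gamma}$ together with Zariski's main theorem. A second point requiring care is the passage from integral $Y$ to an arbitrary reduced $Y$: here the normalization decomposes as $\widetilde{Y}=\bigsqcup_i\widetilde{Y_i}$ over the irreducible components $Y_i$ of $Y$, so the argument above produces algebraic maps $h_i\colon Y_i\to X$ with $h_i^{\an}=f|_{Y_i^{\an}}$; since any two of them analytify to $f$ on the reduced intersection $(Y_i\cap Y_j)_{\mathrm{red}}$ and analytification is faithful, they agree there and glue to the desired morphism $h\colon Y\to X$ with $h^{\an}=f$. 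Alternatively one avoids this gluing by checking directly, on irreducible components via a dimension count, that $\overline{\Gamma}^{\an}=\sigma(Y^{\an})$, so that the section $s$ is a surjective closed immersion onto the reduced space $\overline{\Gamma}^{\an}$ and hence an isomorphism, without passing to the irreducible case at all.
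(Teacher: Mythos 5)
Your construction is sound in its core and genuinely different from the paper's proof. The paper argues function-theoretically: it first shows that $f^{-1}(U^{\an})$ is Zariski-open in $Y$ for every Zariski-open $U\subseteq X$ (the set $\nu\bigl((f\circ\nu^{\an})^{-1}(U)\bigr)$ is constructible by Chevalley and open in the complex topology, hence Zariski-open by \cite[Corollaire~XII.2.3]{SGA1}), reduces to $X=\bba^1$, and then observes that $f$ is integral over $\mathcal{O}(Y)$ because it becomes regular on $\widetilde{Y}$, so Proposition~\ref{Prop:RegularFunctionsIntegrallyClosedInHolomorphicFunctions} finishes. You instead globalize the graph-and-section trick that the paper only uses inside the proof of Proposition~\ref{Prop:RegularFunctionsIntegrallyClosedInHolomorphicFunctions}: your finiteness argument for $q$ (cancellation and descent of properness along the surjection $\widetilde{Y}\to\overline{\Gamma}$, then Zariski's main theorem), the construction of the section $s$ from $f$ (valid since $U^{\an}$ is dense in $Y^{\an}$, $Y$ being reduced), and the appeal to Lemma~\ref{Lem:FiniteHolomorphicMapWithInverseIsIsomorphism} together with \cite[Proposition~XII.3.1]{SGA1} are all correct when $Y$ is integral. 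You thereby avoid the reduction to $\bba^1$ and the constructibility argument, at the price of ZMT. (One point to make explicit: that $q$ is an isomorphism over $U$ needs, besides closedness of the graph of $h_U$, the remark that $\Phi(\nu^{-1}(Y\setminus U))$ and hence its closure lies in $(Y\setminus U)\times X$, so $\overline{\Gamma}$ has no extra points over $U$.)

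The genuine gap is the gluing step for reducible $Y$: agreement of the $h_i$ on the pairwise reduced intersections $(Y_i\cap Y_j)_{\mathrm{red}}$ does \emph{not} imply that they glue to a morphism on $Y$. Counterexample: let $Y=V\bigl(xy(x-y)\bigr)\subset\bba^2$ be three concurrent lines $L_1=V(y)$, $L_2=V(x)$, $L_3=V(x-y)$, and take $h_1=x|_{L_1}$, $h_2=0$, $h_3=0$. All pairwise reduced intersections equal the reduced origin, where all three functions vanish; yet any $G\in\CC[x,y]$ vanishing on $L_2$ and $L_3$ lies in $(x)\cap(x-y)=\bigl(x(x-y)\bigr)$, so its restriction to $L_1$ is divisible by $x^2$ and can never equal $x$. (The same computation with convergent power series shows that the triple $(x,0,0)$ is not a holomorphic function on $Y^{\an}$ either, so this refutes only your gluing principle, not the lemma.) The point is that gluing across components requires the higher-order compatibility encoded in the existence of $f$ as a holomorphic map on all of $Y^{\an}$, and your gluing sentence never uses $f$; deducing the reducible case from the integral case is essentially the content of the lemma itself, so the argument as written is circular there.

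Your final alternative is the correct repair and should be promoted to the main argument, since it works for arbitrary reduced $Y$ with no passage to components: the section $s$ is a closed embedding (a section of a separated holomorphic map is a base change of its relative diagonal), its image is closed and contains $s(U^{\an})=q^{-1}(U^{\an})$, and the latter is dense in $\overline{\Gamma}^{\an}$ because $q^{-1}(U)$ is a Zariski-dense open in the reduced scheme $\overline{\Gamma}$ --- density, rather than a dimension count, is all that is needed. Hence $s$ is a surjective closed embedding onto a reduced complex-analytic space, hence an isomorphism, so $q^{\an}=s^{-1}$ is an isomorphism, and \cite[Proposition~XII.3.1]{SGA1} gives that $q$ is an isomorphism, with $f=(r\circ q^{-1})^{\an}$ as you wrote. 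On this route Lemma~\ref{Lem:FiniteHolomorphicMapWithInverseIsIsomorphism} and the reduction to integral $Y$ are not needed at all.
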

\begin{proof}
	We claim that, for every Zariski-open $U\subseteq X$, the preimage $f^{-1}(U^{\an })$ is a Zariski-open subset of~$V$; this will allow us to assume that $X$ is affine. 
	
	Since $f\circ\nu^{\an}$ is a regular map, the preimage $(f\circ\nu^{\an} )^{-1}(U)$ is a Zariski-open algebraic subvariety of~$\tilde{V}$. Therefore $f^{-1}(U^{\an })=\nu^{\an} ((f\circ\nu^{\an} )^{-1}(U)) = \nu(f\circ\nu^{\an} )^{-1}(U))$ is a Zariski constructible subset of~$V$ (note that the normalisation morphism $\nu\colon\tilde{V}\to V$ is not necessarily open, so we cannot directly conclude that $f^{-1}(U^{\an })$ is Zariski-open). Since $f$ is continuous for the complex topology, $f^{-1}(U^{\an })$ is also open for the complex topology. Therefore, by \cite[ Corollaire~XII.2.3]{SGA1}, the subset $f^{-1}(U^{\an })$ is Zariski-open in ~$V$.
	
	So we may (and do) assume that $X$ is affine; by embedding it into some affine space we may assume moreover that $X=\bba^n$, and by considering the coordinate components of $f$ we may even further simplify to the case where $X=\bba^1$. Then $f$ is a holomorphic function on $V$ which becomes regular on~$\tilde{V}$. In particular it is integral over~$\mathcal{O}(V)$. By Proposition~\ref{Prop:RegularFunctionsIntegrallyClosedInHolomorphicFunctions} it is then regular on~$V$.
\end{proof}
\begin{Theorem}\label{thm:test_on_curves_in_body}
Let $X$ be a finite type separated scheme over~$\bbc$. Then the following are equivalent:
\begin{enumerate}
	\item $X$ is Borel hyperbolic.
	\item For every smooth complex algebraic curve $C$, every morphism of complex-analytic spaces $C^{\an }\to X^{\an }$ algebraizes.
\end{enumerate}
\end{Theorem}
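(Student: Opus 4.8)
The plan is to prove the equivalence of (1) and (2) by noting that the implication (1)$\Rightarrow$(2) is immediate from the definition of Borel hyperbolicity, since a smooth complex algebraic curve is in particular a reduced finite type scheme over $\bbc$; so the real content is the reverse implication (2)$\Rightarrow$(1). For this, I would let $Y$ be an arbitrary reduced finite type scheme over $\bbc$ and $f\colon Y^{\an}\to X^{\an}$ a holomorphic map, and show $f$ algebraizes, working by induction on $\dim Y$.

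First I would reduce to the case where $Y$ is irreducible (hence, being reduced, integral): a reduced scheme is a finite union of its irreducible components, and algebraicity can be checked on each component separately, gluing afterwards. Next, using Lemma~\ref{Lemma:normalization_and_algebraization}, I would reduce to the case where $Y$ is \emph{normal} and integral: if the composite $\widetilde{Y}^{\an}\to Y^{\an}\to X^{\an}$ with the normalization algebraizes, then so does $f$, and $\widetilde{Y}$ has the same dimension as $Y$. So it suffices to treat normal integral $Y = V$. The base case $\dim V = 0$ is trivial (a point), and the base case $\dim V = 1$ is precisely hypothesis~(2), after replacing the smooth affine curve there by an arbitrary smooth curve, which is handled by shrinking to an affine open and invoking the $\dim 1$ case together with extension across the finitely many removed points.

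The inductive step is the heart of the argument, and this is where the Dimension Lemma (Proposition~\ref{Prop:DimensionLemma}) does the work. Suppose $\dim V = n+1 \geq 2$ and the theorem is known in all smaller dimensions. For every closed algebraic subvariety $H\subset V$ of codimension one, the normalization $\widetilde{H}$ is a normal integral variety of dimension $n < \dim V$; the composite $\widetilde{H}^{\an}\overset{\nu^{\an}}{\to} H^{\an}\hookrightarrow V^{\an}\overset{f}{\to} X^{\an}$ is a holomorphic map from $\widetilde{H}^{\an}$, so by the inductive hypothesis (applied to the normal integral variety $\widetilde{H}$, whose lower-dimensional maps algebraize) it is algebraic. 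With this hypothesis verified for every codimension-one $H$, Proposition~\ref{Prop:DimensionLemma} applies directly and yields that $f\colon V^{\an}\to X^{\an}$ is itself algebraic, completing the induction.

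The main obstacle, which has essentially been isolated into Proposition~\ref{Prop:DimensionLemma}, is the transcendental-specialization mechanism that lets one pass from algebraicity on all codimension-one slices to algebraicity of $f$ itself; that Proposition in turn rests on the injectivity and algebraicity-detection properties of the map $\iota$ (Lemmas~\ref{Lem:TranscendentalSpecialisationIsInjective} and~\ref{Lem:TranscendentalSpecialisationPreservesTranscendentalFunctions}) together with the integral-closedness statement of Corollary~\ref{Cor:RegularFunctionsIntegrallyClosedInHolomorphicFunctionsOnOpen}. Given those results, the only care needed in assembling the induction is the bookkeeping of the reductions (irreducible, then normal) and the verification that a $k$-generic codimension-one slice is available and meets the inductive hypothesis; the global algebraization then follows by gluing the locally regular maps as in Step~7 of the Dimension Lemma.
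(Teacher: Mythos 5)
Your proposal follows essentially the same route as the paper's proof: the same reductions on the source (pass to irreducible components, then to the normalization via Lemma~\ref{Lemma:normalization_and_algebraization}), the same base cases, and the same use of Proposition~\ref{Prop:DimensionLemma} to drive an induction. The only organizational difference---inducting on the dimension of the source scheme rather than, as the paper does, on the dimension $k$ of subvarieties of a single fixed source $S$---is cosmetic; the two inductions are equivalent and both are correctly set up (normalization preserves dimension, so the inductive hypothesis does apply to $\tilde{H}$).

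There is, however, one genuine omission: you never reduce the \emph{target} $X$. Proposition~\ref{Prop:DimensionLemma} is stated (and proved) for $V$ and $X$ complex algebraic \emph{varieties}: its proof picks a dense affine open $U\subseteq X$ with a closed embedding into $\bba^m$ and arranges $f(V^{\an})$ to be Zariski-dense in $X$, all of which presumes $X$ irreducible and reduced. In the theorem, $X$ is merely a finite type separated scheme, possibly non-reduced and reducible, so your claim that the Proposition ``applies directly'' is not literally true---its hypotheses on $X$ are not satisfied. The fix is exactly what the paper does at the start of its proof: first replace $X$ by $X_{\textup{red}}$ using Lemma~\ref{Lemma:Borel_via_red}; then, since the source has already been made irreducible, its analytification is irreducible, so its image lies in the analytification of a single irreducible component of $X$, and one may replace $X$ by that component (a morphism into a closed analytic subspace factors through it because the source is reduced, and composing the algebraization with the closed immersion recovers $f$). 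With these two sentences inserted before your induction, the argument is complete. A smaller slip: hypothesis~(ii) of the theorem you are proving already concerns \emph{arbitrary} smooth curves, so your base case needs no affine-to-general extension step; that refinement is only relevant for deducing Theorem~\ref{thm:test_on_curves}, as in the remark following Theorem~\ref{thm:test_on_curves_in_body} in the paper.
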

\begin{proof}
	The implication (i)~$\Rightarrow$~(ii) is obvious. For the other direction, assume (ii), let $S$ be a reduced scheme of finite type over $\bbc$ and let $f\colon S^{\an }\to X^{\an }$ be a morphism of complex-analytic spaces. We need to show that $f$ algebraizes.
	
	First, by Lemma~\ref{Lemma:Borel_via_red} we can assume that $X$ is reduced. By considering irreducible components, we can also assume that both $X$ and $S$ are irreducible, hence they are varieties. By Lemma~\ref{Lemma:normalization_and_algebraization} we can also assume that $S$ is normal.
	
	If $S$ is a point, there is nothing to show. If $\dim S=1$, then $S$ is a smooth algebraic curve, so by assumption $f$ is algebraic. We therefore assume $\dim S\ge 2$. Consider, for every $1\le k\le\dim S$, the following statement: \textit{For every $k$-dimensional subvariety $H\subseteq S$, the composition}
	\begin{equation*}
	\tilde{H}^{\an }\overset{\nu^{\an }}{\to }H^{\an }\hookrightarrow S^{\an }\overset{f}{\to }X^{\an }
	\end{equation*}
	\textit{algebraizes.} Now, this statement is true for $k=1$ by assumption, and from Proposition~\ref{Prop:DimensionLemma} we conclude that if it holds for $k$ then it also holds for $k+1$. By induction, it then also holds for $k=n$, which means that $f$ itself is algebraic.
\end{proof}
\begin{Remark}
	One can further sharpen Theorem \ref{thm:test_on_curves_in_body}, for instance by only considering $C$ which are also affine, or only $C$ which are also hyperbolic. In both cases this follows from the fact that every smooth curve has an open cover by curves with the additional property. 
\end{Remark}

\subsection{Relating different analytic notions of hyperbolicity}\label{section:proper_case} 
In this section we gather known results relating the different notions of hyperbolicity. We start with an extension property for holomorphic maps. Write $\Delta =\{ z\in\bbc\mid\lvert z\rvert <1 \}$ and $\Delta^{\ast }=\Delta\setminus\{ 0\}$.

\begin{Definition}
A finite type  separated scheme $X$ over $\CC$ has the \emph{$\Delta^\ast$-extension property} if there is an open immersion $X\subset \overline{X}$ with $\overline{X}$ proper over $\CC$ such that, for every   morphism $f\colon\Delta^\ast \to X^{\an}$ there  is a morphism $\Delta\to \overline{X}^{\an}$ which extends $f$. 
\end{Definition}
 Note that a similar (but different) notion is studied in \cite{delta}. Moreover, Picard's Big Theorem can be stated as saying that $\mathbb{P}^1\setminus \{0,1,\infty\}$ has the $\Delta^\ast$-extension property. 
 
By using that one can test Borel hyperbolicity on maps from curves, we can prove that a variety $X$ having the $\Delta^\ast$-extension property is in fact Borel hyperbolic. 
 \begin{Corollary}\label{Corollary:Delta_implies_Borel}
 	Let $X$ be a finite type separated scheme over $\CC$. If $X$ has the $\Delta^\ast$-extension property, then $X$ is Borel hyperbolic.
 \end{Corollary}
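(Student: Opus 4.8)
The plan is to verify condition~(ii) of Theorem~\ref{thm:test_on_curves_in_body} and to extend holomorphic maps from curves across their punctures by means of the $\Delta^{\ast}$-extension property. Fix an open immersion $X\subseteq\overline{X}$ with $\overline{X}$ proper over $\CC$ witnessing the property, and write $i\colon X\hookrightarrow\overline{X}$ for the inclusion. By Theorem~\ref{thm:test_on_curves_in_body} it suffices to show that, for every smooth connected algebraic curve $C$ over $\CC$ and every holomorphic map $f\colon C^{\an}\to X^{\an}$, the map $f$ algebraizes. Let $\overline{C}$ be the smooth projective model of $C$, so that $\overline{C}$ is a smooth projective curve containing $C$ as a dense open subscheme, and $\Sigma=\overline{C}\setminus C$ is a finite set of closed points.

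Next I would extend $i^{\an}\circ f\colon C^{\an}\to\overline{X}^{\an}$ across each point of $\Sigma$. For $p\in\Sigma$ choose a local holomorphic coordinate identifying a neighbourhood of $p$ in $\overline{C}^{\an}$ with $\Delta$ and $p$ with $0$, so that the corresponding punctured neighbourhood is identified with $\Delta^{\ast}$. The restriction of $i^{\an}\circ f$ then defines a holomorphic map $\Delta^{\ast}\to X^{\an}$, which by the $\Delta^{\ast}$-extension property extends to a holomorphic map $\Delta\to\overline{X}^{\an}$. This extension agrees with $i^{\an}\circ f$ on $\Delta^{\ast}$, and since the points of $\Sigma$ are isolated these local extensions glue with $i^{\an}\circ f$ to a single holomorphic map $\overline{f}\colon\overline{C}^{\an}\to\overline{X}^{\an}$ satisfying $\overline{f}|_{C^{\an}}=i^{\an}\circ f$.

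Now both $\overline{C}$ and $\overline{X}$ are proper over $\CC$, so the analytification functor is fully faithful on the category of proper $\CC$-schemes; this is the proper case of Serre's GAGA \cite{Serre} (concretely, the graph of $\overline{f}$ is a closed analytic subset of $(\overline{C}\times\overline{X})^{\an}$, hence algebraic by Chow's theorem, and its first projection is an isomorphism). Consequently $\overline{f}$ algebraizes: there is a morphism of $\CC$-schemes $\overline{g}\colon\overline{C}\to\overline{X}$ with $\overline{g}^{\an}=\overline{f}$. Restricting to the open subscheme $C\subseteq\overline{C}$, the morphism $\overline{g}|_{C}$ has set-theoretic image contained in $X$, since $\overline{g}^{\an}|_{C^{\an}}=\overline{f}|_{C^{\an}}=i^{\an}\circ f$ takes values in $X^{\an}$. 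As $X\subseteq\overline{X}$ is open and $C$ is reduced and of finite type over $\CC$, the preimage $(\overline{g}|_{C})^{-1}(X)$ is an open subscheme of $C$ containing every closed point, hence equals $C$. Thus $\overline{g}|_{C}$ factors through a morphism $g\colon C\to X$ with $g^{\an}=f$, so $f$ algebraizes, as required.

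The routine parts are the existence of the smooth projective model and the gluing of the local extensions across $\Sigma$; the essential inputs are the $\Delta^{\ast}$-extension property, which supplies the extension across each puncture, and the full faithfulness of analytification on proper $\CC$-schemes, which turns the holomorphic extension $\overline{f}$ into an algebraic morphism. I expect this GAGA input to be the main point to justify carefully, together with the verification that the resulting algebraic extension restricts to a morphism factoring through the open subscheme $X$.
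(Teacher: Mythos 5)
Your proof is correct and follows essentially the same route as the paper: reduce to smooth curves via Theorem~\ref{thm:test_on_curves_in_body}, extend the map across the punctures of $\overline{C}$ using the $\Delta^{\ast}$-extension property, and algebraize the resulting map $\overline{C}^{\an}\to\overline{X}^{\an}$ by GAGA for proper $\CC$-schemes (the paper cites \cite[Corollaire~XII.4.5]{SGA1}, which is the correct reference since $\overline{X}$ is only assumed proper, so strictly speaking one needs Grothendieck's extension of Serre's projective GAGA rather than Chow's theorem). Your extra care in checking that the algebraic extension $\overline{g}|_{C}$ factors through the open subscheme $X$ is a detail the paper leaves implicit, and it is handled correctly.
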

 \begin{proof} Let $X\subset \overline{X}$ be as in the definition of the $\Delta^\ast$-extension property.
 	Let $C$ be a smooth curve and let $\varphi\colon C^{\an}\to X^{\an}$ be a holomorphic map. By   Theorem \ref{thm:test_on_curves_in_body}, it suffices to show that $\varphi$ is algebraic. Let $\overline{C}$ be the smooth compactification of $C$, and let $p\in \overline{C}\setminus C$ be a point. Let $\Delta \subset \overline{C}^{\an}$ be an open unit disk with origin $p$, and such that the punctured open unit disk $\Delta^\ast$ does not contain any point of $\overline{C}^{\an}\setminus C^{\an}$.
 	
Since $X$ has the $\Delta^\ast$-extension property, the induced holomorphic map $\Delta^\ast \to X^{\an}$ extends to a holomorphic map $\Delta \to \overline{X}^{\an}$.  Applying this to all $p$ in $\overline{C}\setminus C$, we see that the morphism $C^{\an}\to X^{\an}$ extends to a morphism $\overline{C}^{\an}\to \overline{X}^{\an}$. By the GAGA theorem for proper $\CC$-schemes   \cite[Corollaire~XII.4.5]{SGA1}, the morphism $\overline{C}^{\an}\to \overline{X}^{\an}$ algebraizes, so that $\varphi\colon C^{\an}\to X^{\an}$ algebraizes. This concludes the proof.
\end{proof}
 
 \begin{Remark}
By Corollary \ref{Corollary:Delta_implies_Borel} and Picard's Big Theorem, we see that $\mathbb{P}^1\setminus \{0,1,\infty\}$ is Borel hyperbolic. 
\end{Remark}

 If $X$ is a   finite type separated reduced scheme over $\CC$, then Kobayashi defined a pseudo-distance $d_X$ on $X^{\an}$; see \cite{Kobayashi}. We follow standard terminology and say that $X$ is \emph{Kobayashi hyperbolic} if $d_{X_{\textrm{red}}}$ is a distance function on $X_{\textrm{red}}$.
Suppose that $X$ is a dense open subscheme of a proper reduced scheme $Y$. We follow Kobayashi and say that $X$ is \emph{hyperbolically embedded} in $Y$ if $X$ is Kobayashi hyperbolic, and every point in $Y$ is hyperbolic \cite[\S 3.3]{Kobayashi}. We say that a finite type separated    scheme  $X$ over $\CC$ is \emph{hyperbolically embeddable} if there is a proper   scheme $Y$ and an open immersion $X\subset Y$ such that $X_{\textrm{red}}$ is hyperbolically embedded in $Y_{\textrm{red}}$.

 It is well-known that   Kobayashi hyperbolicity and Brody hyperbolicity are closely related. The following theorem makes this more precise, and also clarifies the relation to Borel hyperbolicity.

 \begin{Theorem}\label{thm:hyperbolicity_implications}
 	Let $X$ be a finite type integral   scheme over $\mathbb C$. The following statements hold.
 	\begin{enumerate}
 	 		\item If $X$ is separated and has the $\Delta^\ast$-extension property, then $X$ is Borel hyperbolic.
 		\item If $X$ is separated and hyperbolically embeddable, then $X$ is Kobayashi hyperbolic, Borel hyperbolic, and satisfies the $\Delta^\ast$-extension property.
 		\item If $X$ is Kobayashi hyperbolic, then $X$ is Brody hyperbolic.
 		\item If $X$ is Borel hyperbolic, then $X$ is Brody hyperbolic.
 	\end{enumerate}
 \end{Theorem}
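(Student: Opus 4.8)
The plan is to treat this theorem as an exercise in assembling results, since each of the four assertions either reproduces a statement already proved in this paper or reduces to a classical fact from Kobayashi's theory. I would first dispose of the two assertions that require no new argument: part~(i) is precisely Corollary~\ref{Corollary:Delta_implies_Borel}, and part~(iv) is precisely Lemma~\ref{Lemma:Borel_implies_Brody}, so for both I would simply cite the relevant statement and move on.

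For part~(iii) I would invoke the distance-decreasing property of the Kobayashi pseudo-distance, which is its defining feature: any holomorphic map $f\colon\mathbb{C}\to X^{\an}$ satisfies $d_X(f(z),f(w))\le d_{\mathbb{C}}(z,w)$ for all $z,w\in\mathbb{C}$. Since $\mathbb{C}$ is not Kobayashi hyperbolic its pseudo-distance vanishes identically, forcing $d_X(f(z),f(w))=0$; as $X$ is Kobayashi hyperbolic (and integral, hence reduced, so that $d_X=d_{X_{\mathrm{red}}}$ is a genuine distance), we conclude $f$ is constant. This is standard and I would reference Kobayashi \cite{Kobayashi}.

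The only assertion with genuine content is part~(ii), and it splits into three independent claims. The Kobayashi hyperbolicity is built into the definition: if $X$ is hyperbolically embeddable then $X_{\mathrm{red}}=X$ is hyperbolically embedded in some proper $Y_{\mathrm{red}}$, and being hyperbolically embedded already includes Kobayashi hyperbolicity by definition. For the $\Delta^\ast$-extension property I would appeal to the Kiernan--Kobayashi big-Picard-type extension theorem: when $X$ is hyperbolically embedded in a proper $Y$, every holomorphic map $\Delta^\ast\to X^{\an}$ extends across the puncture to a holomorphic map $\Delta\to\overline{X}^{\an}$ with $\overline{X}=Y$, which is exactly the required property. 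Finally, Borel hyperbolicity follows by feeding this $\Delta^\ast$-extension property into part~(i), i.e.\ Corollary~\ref{Corollary:Delta_implies_Borel}.

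The main obstacle, such as it is, lies entirely in part~(ii): one must locate and correctly invoke the Kiernan--Kobayashi extension theorem in the precise form stated here, and verify that the passages between $X$ and $X_{\mathrm{red}}$ in the various definitions are harmless---which is immediate, since $X$ is integral. All the hard complex-analytic input is imported as a citation rather than reproven, so the remaining work is purely organizational, namely presenting the four implications in an order that exposes their logical dependencies (with (i) feeding into the Borel-hyperbolicity clause of (ii)).
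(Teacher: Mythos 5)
Your proposal is correct and follows essentially the same route as the paper: part (i) is Corollary~\ref{Corollary:Delta_implies_Borel}, part (iv) is Lemma~\ref{Lemma:Borel_implies_Brody}, part (iii) is the standard distance-decreasing argument the paper cites as \cite[Proposition~3.6.1]{Kobayashi}, and part (ii) is handled exactly as in the paper by noting Kobayashi hyperbolicity is built into the definition, invoking the Kiernan--Kobayashi extension theorem (the paper's citation \cite[Theorem~6.3.7]{Kobayashi}) for the $\Delta^\ast$-extension property, and then feeding that into part (i). The only cosmetic difference is that you spell out the pseudo-distance argument for (iii) rather than citing it, which is harmless.
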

 \begin{proof} Note that $(i)$ follows from  Corollary \ref{Corollary:Delta_implies_Borel}.  To prove $(ii)$, suppose that $X$ is separated and hyperbolically embedded in $\overline{X}$. It then follows  readily  that $X$ is Kobayashi hyperbolic. 
 Also, it follows from \cite[Theorem~6.3.7]{Kobayashi} that $X$ has the $\Delta^\ast$-extension property.  Thus, the Borel hyperbolicity of $X$ now follows from $(i)$.   Note that $(iii)$ and $(iv)$ follow from \cite[Proposition~3.6.1]{Kobayashi} and Lemma \ref{Lemma:Borel_implies_Brody}, respectively.
 \end{proof}

 \begin{proof}[Proof of Theorem \ref{thm:why_hyp}]  Note that Borel hyperbolic varieties are Brody hyperbolic (Lemma \ref{Lemma:Borel_implies_Brody}). This proves $(i)$.  Let $X$ be a proper scheme over $\mathbb C$. If $X$ is Brody hyperbolic, then $X$ is Kobayashi hyperbolic \cite[Theorem~3.6.3]{Kobayashi}, and thus (clearly) hyperbolically embedded in itself. Therefore, by   Theorem \ref{thm:hyperbolicity_implications},   the proper $\CC$-scheme $X$ is Borel hyperbolic.  This proves  $(iii)$. Moreover, $(iv)$ follows from   Theorem \ref{thm:hyperbolicity_implications}.(ii).
 
 Thus, to conclude the proof, it remains to prove  the statement about curves.
 Let $X$ be a finite type separated one-dimensional scheme over $\CC$. Assume $X$ is Brody hyperbolic. To show that $X$ is Borel hyperbolic, we may and do assume that $X$ is reduced (Lemma \ref{Lemma:Borel_via_red}). It is well-known that Brody hyperbolic curves are Kobayashi hyperbolic \cite{Kobayashi}. It is clear that every point in a compactification $\overline{X}$ of $X$ is hyperbolic (as it is isolated). We see that $X$ is hyperbolically embeddable, and thus Borel hyperbolic  (Theorem \ref{thm:hyperbolicity_implications}). This concludes the proof of $(ii)$.
 \end{proof}

\begin{Corollary}
Let $X$ be a proper scheme over $\mathbb{C}$. Then $X$ is Borel hyperbolic if and only if every morphism $\mathbb{A}^{1,\an}\to X^{\an}$ is algebraic.
\end{Corollary}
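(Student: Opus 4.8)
The plan is to treat the two implications separately, with essentially all of the mathematical content residing in results already established earlier in the paper.

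For the forward implication there is nothing to do: $\mathbb{A}^1$ is a reduced finite type scheme over $\CC$, so if $X$ is Borel hyperbolic then Definition \ref{def:borel_hyp} already guarantees that every holomorphic map $\mathbb{A}^{1,\an}\to X^{\an}$ algebraizes. I would simply record this observation.

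For the reverse implication, the key remark is that the hypothesis — that every morphism $\mathbb{A}^{1,\an}\to X^{\an}$ algebraizes — is, via the identification $\mathbb{A}^{1,\an}=\CC$, precisely the hypothesis of Lemma \ref{Lemma:Borel_implies_Brody0}. Applying that lemma, I would first deduce that $X$ is Brody hyperbolic. Then, using that $X$ is proper, I would invoke Theorem \ref{thm:why_hyp}(iii), which asserts that for a proper $\CC$-scheme Brody hyperbolicity and Borel hyperbolicity are equivalent, and conclude that $X$ is Borel hyperbolic.

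The notable feature here is not the elementary two-line argument, but the statement it yields: for proper $X$ one may test Borel hyperbolicity on the single, maximally simple source $\mathbb{A}^1$. This reduction is powered entirely by Theorem \ref{thm:why_hyp}(iii), whose own proof rests on the testing-on-curves result (Theorem \ref{thm:test_on_curves_in_body}) together with the classical chain of Brody, Kobayashi, and Kwack — a Brody hyperbolic proper scheme is Kobayashi hyperbolic, hence hyperbolically embedded in itself, hence Borel hyperbolic. Consequently I anticipate no obstacle in the proof of this corollary itself; the entire weight has already been discharged into the earlier machinery, and the present statement is a direct synthesis of Lemma \ref{Lemma:Borel_implies_Brody0} and Theorem \ref{thm:why_hyp}(iii).
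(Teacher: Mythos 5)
Your proof is correct and is essentially identical to the paper's: both directions reduce to Lemma \ref{Lemma:Borel_implies_Brody0} (maps from $\mathbb{A}^{1,\an}$ algebraize $\Rightarrow$ Brody hyperbolic) combined with Theorem \ref{thm:why_hyp}(iii) (Brody $\Leftrightarrow$ Borel for proper schemes), with the forward implication being immediate from Definition \ref{def:borel_hyp}. The only difference is cosmetic: the paper arranges the same three implications as a cycle, whereas you split them into the two implications explicitly.
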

\begin{proof} If every holomorphic map $\mathbb{A}^{1,\an}\to X^{\an}$ algebraizes, then $X$ is Brody hyperbolic (Lemma~\ref{Lemma:Borel_implies_Brody0}). Conversely, if $X$ is Brody hyperbolic, then $X$ is Borel hyperbolic by  Theorem \ref{thm:why_hyp}.(\textit{iii}). In particular, if $X$ is Brody hyperbolic, then every morphism $\mathbb{A}^{1,\an}\to X^{\an}$ is algebraic.
\end{proof}

 \subsection{Borel hyperbolicity along quasi-finite maps}

 We first apply  Riemann's existence theorem to show that Borel hyperbolicity descends along \'etale coverings.
 \begin{Lemma}\label{Lemma:Et_for_Borel} Let $f\colon X\to Y$ be a   finite  \'etale   morphism of finite type schemes  over $\mathbb C$. If $X$ is Borel hyperbolic, then $Y$ is Borel hyperbolic. 
 \end{Lemma}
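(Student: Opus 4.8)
The plan is to verify the Borel hyperbolicity of $Y$ straight from Definition \ref{def:borel_hyp}, transporting each holomorphic map into $Y^{\an}$ up to the cover $X^{\an}$ by pulling back the covering $f^{\an}$, using Riemann's existence theorem to algebraize the pulled-back cover, invoking the Borel hyperbolicity of $X$ to algebraize the lifted map, and finally using faithfully flat descent to push the resulting scheme morphism back down to $Y$. No reduction to curves (Theorem \ref{thm:test_on_curves_in_body}) is needed, since the definition already supplies the reduced test scheme we will produce.

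Concretely, let $S$ be a reduced finite type $\mathbb{C}$-scheme and $\varphi\colon S^{\an}\to Y^{\an}$ a holomorphic map; I must show $\varphi$ algebraizes. Here I take $f$ to be surjective, as is implicit in the covering-space setting, so that $f^{\an}\colon X^{\an}\to Y^{\an}$ is a finite covering space. First I would form the analytic fibre product $P = S^{\an}\times_{Y^{\an}}X^{\an}$, with projections $\pi\colon P\to S^{\an}$ and $q\colon P\to X^{\an}$ satisfying $f^{\an}\circ q=\varphi\circ\pi$. Since finite covering spaces pull back along arbitrary holomorphic maps to finite covering spaces, $\pi$ is a finite covering space of $S^{\an}$. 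By Riemann's existence theorem \cite[Expos\'e~XII]{SGA1} there is a finite \'etale $\rho\colon T\to S$ together with an isomorphism $T^{\an}\cong P$ over $S^{\an}$ identifying $\rho^{\an}$ with $\pi$; the scheme $T$ is again reduced and of finite type, and $\rho^{\an}$ is surjective because $f^{\an}$ is.

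Transporting $q$ through this isomorphism yields a holomorphic map $\psi\colon T^{\an}\to X^{\an}$ with $f^{\an}\circ\psi=\varphi\circ\rho^{\an}$. As $X$ is Borel hyperbolic and $T$ is a reduced finite type $\mathbb{C}$-scheme, $\psi$ algebraizes to some $\Psi\colon T\to X$, so that $\Phi \defined f\circ\Psi\colon T\to Y$ satisfies $\Phi^{\an}=\varphi\circ\rho^{\an}$. The remaining step is descent along the fppf cover $\rho$: morphisms into the scheme $Y$ form an fppf sheaf, so I only need the cocycle condition $\Phi\circ\mathrm{pr}_1=\Phi\circ\mathrm{pr}_2$ on $T\times_S T$. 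I would check this first analytically, using $\Phi^{\an}=\varphi\circ\rho^{\an}$, the compatibility of analytification with fibre products, and $\rho\circ\mathrm{pr}_1=\rho\circ\mathrm{pr}_2$, and then upgrade it to an identity of scheme morphisms via the injectivity of $\Hom(T\times_S T,Y)\to\Hom((T\times_S T)^{\an},Y^{\an})$. Descent then produces $\varphi_{\mathrm{alg}}\colon S\to Y$ with $\varphi_{\mathrm{alg}}\circ\rho=\Phi$, and comparing analytifications gives $\varphi_{\mathrm{alg}}^{\an}\circ\rho^{\an}=\varphi\circ\rho^{\an}$, whence $\varphi_{\mathrm{alg}}^{\an}=\varphi$ because $\rho^{\an}$ is surjective and $S^{\an}$ is reduced.

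The step I expect to be the main obstacle is this final descent: one must know that $\Hom(-,Y)$ satisfies fppf descent for the (possibly non-separated) scheme $Y$, and that the cocycle condition, which is naturally visible only on the analytic side, can be turned into an honest equality of scheme morphisms. This is precisely where the faithfulness of analytification is indispensable. Two secondary points warrant care: the surjectivity of $f$ is genuinely needed (without it the components of $Y$ not in the image carry no information, as $X=\varnothing$ shows), and Riemann's existence theorem must be applied to the possibly disconnected, possibly singular reduced base $S$ rather than merely to smooth curves.
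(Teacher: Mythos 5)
Your proposal is correct and follows essentially the same route as the paper's proof: pull the finite \'etale cover back along $\varphi$ in the analytic category, algebraize the resulting finite analytic cover of $S^{\an}$ by Riemann's existence theorem, and apply the Borel hyperbolicity of $X$ to the lifted map. Where you genuinely add something is the final step. The paper, having algebraized the composition $\tilde{Z}^{\an}\cong\mathfrak{Z}'\to X^{\an}\to Y^{\an}$, simply concludes ``Therefore, the morphism $Z^{\an}\to Y^{\an}$ algebraizes''; that ``therefore'' is precisely the descent along the finite \'etale surjective morphism $\tilde{Z}\to Z$ which you carry out in detail, with the cocycle condition checked analytically and promoted to an equality of scheme morphisms via the injectivity of $\Hom(T\times_S T,Y)\to\Hom((T\times_S T)^{\an},Y^{\an})$. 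Your elaboration is sound: representable functors are sheaves for the fppf topology for an arbitrary target scheme, so no separatedness of $Y$ is required, and your closing argument that $\varphi_{\mathrm{alg}}^{\an}=\varphi$ (surjectivity of $\rho^{\an}$ together with reducedness of $S^{\an}$) is valid. You are also right to insist on surjectivity of $f$: it is genuinely needed, since the empty scheme is finite \'etale over any $Y$ and is vacuously Borel hyperbolic, and both the statement and the proof in the paper tacitly assume the map is a covering in this sense.
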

 \begin{proof}   Let $Z$ be a     finite type reduced scheme  and let $Z^{\an}\to Y^{\an}$ be a holomorphic map.   Consider the Cartesian diagram of complex-analytic spaces
 	\[\xymatrix{ \mathfrak{Z}' \ar[rr] \ar[d] & & X^{\an} \ar[d]^{\text{finite \'etale}} \\ Z^{\an} \ar[rr]  & &  Y^{\an}} \] Since $X^{\an}\to Y^{\an}$ is finite \'etale, it follows that $\mathfrak{Z}'\to Z^{\an}$ is finite \'etale. By Riemann's existence theorem \cite[Expos\'e XII, Th\'eor\`eme 5.1]{SGA1}, the morphism $\mathfrak{Z}'\to Z^{\an}$ algebraizes.  That is,  there exist a finite \'etale morphism $\tilde{Z}\to Z$ of schemes such that $\tilde{Z}^{\textrm{an}}$ is isomorphic to $\mathfrak{Z}'$ over $Z^{\textrm{an}}$.   Now, as $X$ is  a Borel hyperbolic finite type scheme, the morphism $\tilde{Z}^{\textrm{an}} \cong \mathfrak{Z}'\to X^{\an}$ algebraizes.  In particular, the composed morphism $$\tilde{Z}^{\an}\cong \mathfrak{Z}'\to X^{\an}\to Y^{\textrm{an}}$$ algebraizes. Therefore,  the morphism $Z^{\textrm{an}}\to Y^{\textrm{an}}$ algebraizes.
 	\end{proof}

To prove that Borel hyperbolicity persists along quasi-finite maps, we will use the following simple lemma.

\begin{Lemma}\label{lem:1_Borel_after_qf}
 			Let $f\colon X\to Y$ be a quasi-finite   morphism of finite type separated schemes over $\mathbb C$. Let $C$ be a finite type separated integral curve over $\CC$. Let $\varphi\colon C^{\an}\to X^{\an}$ be a holomorphic map. If the composition $C^{\an}\to Y^{\an}$ is algebraic, then $\varphi$ is algebraic. 
 			\end{Lemma}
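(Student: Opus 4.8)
The plan is to replace the quasi-finite map by a finite one via Zariski's Main Theorem, base-change the (now algebraic) composite $C^{\an}\to Y^{\an}$ to produce a finite scheme over $C$, and then recognise the lift $\varphi$ as a holomorphic \emph{section} of a finite holomorphic map. The algebraicity of such a section is precisely the kind of rigidity detected by Lemma~\ref{Lem:FiniteHolomorphicMapWithInverseIsIsomorphism}, so the whole argument is engineered to bring that lemma into play.

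First I would build the finite model. As $X,Y$ are separated of finite type, $f$ is separated and quasi-finite, so by Zariski's Main Theorem it factors as $X\overset{\jmath}{\hookrightarrow}\bar X\overset{\bar f}{\to}Y$ with $\jmath$ an open immersion and $\bar f$ finite. By hypothesis $f^{\an}\circ\varphi=\bar f^{\an}\circ\jmath^{\an}\circ\varphi$ is algebraic, say equal to $g^{\an}$ for some $g\colon C\to Y$. Set $P:=C\times_Y\bar X$ with projection $p\colon P\to C$; being a base change of the finite map $\bar f$, it is finite, and $P$ is a finite type $\CC$-scheme of dimension $\le 1$. Since analytification commutes with fibre products, $P^{\an}=C^{\an}\times_{Y^{\an}}\bar X^{\an}$, and the maps $\mathrm{id}_{C^{\an}}$ and $\jmath^{\an}\circ\varphi$, which agree over $Y^{\an}$, induce a holomorphic section $\sigma\colon C^{\an}\to P^{\an}$ of the finite map $p^{\an}$.

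Next I would algebraize $\sigma$. By Lemma~\ref{Lemma:normalization_and_algebraization}, applied to the normalization $\nu\colon\tilde C\to C$ (the composite $f^{\an}\circ\varphi\circ\nu^{\an}=(g\circ\nu)^{\an}$ remaining algebraic), it suffices to treat the case where $C$ is normal, so that $C^{\an}$ is an irreducible reduced Riemann surface. Then $\sigma$, a section of the separated map $p^{\an}$, has closed analytic image $\Sigma:=\sigma(C^{\an})$, which is irreducible, reduced, and one-dimensional. Writing $P_{\mathrm{red}}=\bigcup_i P_i$ as a union of its integral components, each integral curve $P_i$ has irreducible analytification (its normalization is a smooth connected curve mapping finitely and surjectively onto $P_i^{\an}$), and $\Sigma\subseteq P^{\an}_{\mathrm{red}}=\bigcup_i P_i^{\an}$. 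Irreducibility of $\Sigma$ forces $\Sigma\subseteq P_{i_0}^{\an}$ for a single one-dimensional component $P_{i_0}$, and since a proper closed analytic subset of the irreducible one-dimensional space $P_{i_0}^{\an}$ would have dimension zero, we get $\Sigma=P_{i_0}^{\an}$. Hence $p^{\an}$ restricts to a finite holomorphic map $p_{i_0}^{\an}\colon P_{i_0}^{\an}\to C^{\an}$ of reduced irreducible spaces admitting the section $\sigma$; by Lemma~\ref{Lem:FiniteHolomorphicMapWithInverseIsIsomorphism} it is an isomorphism, and by \cite[Proposition~XII.3.1]{SGA1} the scheme map $p_{i_0}\colon P_{i_0}\to C$ is an isomorphism. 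Thus $s:=\bigl(P_{i_0}\hookrightarrow P\bigr)\circ p_{i_0}^{-1}\colon C\to P$ is an algebraic section with $s^{\an}=\sigma$.

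Finally I would descend to $X$. Composing $s$ with the projection $P=C\times_Y\bar X\to\bar X$ yields $h\colon C\to\bar X$ with $h^{\an}=\jmath^{\an}\circ\varphi$, so $h^{\an}(C^{\an})\subseteq X^{\an}$. As $\jmath$ is an open immersion, $h^{-1}(X)$ is a Zariski-open subscheme of $C$ with $(h^{-1}(X))^{\an}=C^{\an}$, and since $C$ is integral this forces $h^{-1}(X)=C$; thus $h$ factors through a morphism $\tilde\varphi\colon C\to X$, and comparing analytifications through the monomorphism $\jmath^{\an}$ gives $\tilde\varphi^{\an}=\varphi$. The hard part is the middle step: extracting from the merely holomorphic section $\sigma$ a single integral algebraic curve mapping isomorphically onto $C$. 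This is exactly where analytic-geometric content is needed, and it is why the reduction to normal $C$ together with the irreducibility of the analytification of integral curves must be set up before Lemma~\ref{Lem:FiniteHolomorphicMapWithInverseIsIsomorphism} can be applied.
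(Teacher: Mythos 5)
Your proof is correct, but it follows a genuinely different route from the paper's. The paper stays inside hyperbolicity theory: after disposing of the case where $C\to Y$ is constant (using finiteness of the fibres of $f$), it takes the image curve $D\subset Y$, shrinks it to a dense open Brody hyperbolic subvariety $D^0\subseteq D$, forms the quasi-finite pullback $E=D^0\times_Y X$, observes that $E$ is then a Brody hyperbolic curve and hence Borel hyperbolic by Theorem~\ref{thm:why_hyp}.(ii) --- a result that ultimately rests on hyperbolic embeddability, Kwack's extension theorem and GAGA --- and factors $\varphi$ through $E^{\an}$. You instead bypass the hyperbolicity machinery entirely: Zariski's Main Theorem plus base change converts the problem into algebraizing a holomorphic \emph{section} of a finite map $p\colon P\to C$, which you resolve with Lemma~\ref{Lem:FiniteHolomorphicMapWithInverseIsIsomorphism} and \cite[Proposition~XII.3.1]{SGA1}, i.e.\ with exactly the pair of tools the paper uses to prove Proposition~\ref{Prop:RegularFunctionsIntegrallyClosedInHolomorphicFunctions}. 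Your route buys three things: it is self-contained relative to Section~\ref{section22} (no input from Section~\ref{section:proper_case}); it needs no constant/non-constant case distinction; and it is more robust on a delicate point --- in the paper's argument $E^{\an}=(f^{\an})^{-1}(D^{0,\an})$, so $\varphi$ factors through $E^{\an}$ only after restricting to $C^0=g^{-1}(D^0)$, and one would still have to extend the resulting algebraic map across the finitely many points of $C\setminus C^0$, a step the paper leaves implicit, whereas your fibre-product construction treats all of $C$ at once. What the paper's route buys in exchange is brevity and the conceptual point that quasi-finite descent reduces to the curve case of Theorem~\ref{thm:why_hyp}. Two cosmetic remarks on your write-up: the components $P_i$ of $P_{\mathrm{red}}$ may well be zero-dimensional (isolated points of fibres), but your dimension count already excludes these as carriers of $\Sigma$; and the equality $\Sigma=P_{i_0}^{\an}$ is not actually needed, since once $\sigma$ factors through $P_{i_0}^{\an}$ you have a section of a finite map of reduced irreducible spaces, and Lemma~\ref{Lem:FiniteHolomorphicMapWithInverseIsIsomorphism} itself supplies the surjectivity.
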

 		
 		\begin{proof}  
  			The holomorphic map $C^{\textrm{an}}\to Y^{\textrm{an}}$ is the algebraization of a morphism $C\to Y$. 
 			If the map $C \to Y $ is constant, then the image of $C^{\textrm{an}} \to X^{\textrm{an}}$ is contained in a fibre of $f$. Since the fibres of $f$ are finite, the morphism    $C^{\textrm{an}}\to X^{\textrm{an}}$ is   constant, and thus   algebraic.  
 			
 			Thus, to prove the lemma, we may and do assume that $C\to Y$ is non-constant, hence quasi-finite. Let $D$ be the image of $C$ in $Y$. Note that $D$ is a one-dimensional integral  locally closed subscheme of $Y$.  Let $D^0$ be  a dense open Brody hyperbolic subvariety. Let $C^0$ be the inverse image of $D^0$ along $C\to D$.
  			
 		  Let $E = D^0\times_Y X$ be the pull-back of $f$ along $D^0\subset Y$.   Since $f$ is quasi-finite, the morphism $E\to D^0$ is quasi-finite. Since $D^0$ is Brody hyperbolic, we see that $E$ is a Brody hyperbolic   curve.  Now,  since $E$ is a Brody hyperbolic   curve, it is Borel  hyperbolic (Theorem \ref{thm:why_hyp}).
 		  
Note that $\varphi\colon C^{\textrm{an}}\to X^{\textrm{an}}$ factors via a holomorphic map $C^{\textrm{an}} \to E^{\textrm{an}}$. Since  $E$ is Borel hyperbolic, we conclude that $C^{\textrm{an}}\to E^{\textrm{an}}$ algebraizes. Since the inclusion    $E^{\an}\subset X^{\an}$ is algebraic, this proves that $C^{\textrm{an}}\to X^{\textrm{an}}$ algebraizes.
 		\end{proof}

  \begin{proof}[Proof of Theorem \ref{thm2}]
Let $C$ be a smooth quasi-projective curve over $\CC$ and let $C^{\an}\to X^{\an}$ be a holomorphic map. Since $Y$ is Borel hyperbolic, the composed holomorphic map $C^{\an}\to X^{\an}\to Y^{\an}$ is algebraic. Therefore, the holomorphic map $C^{\an}\to X^{\an}$ is algebraic (Lemma \ref{lem:1_Borel_after_qf}). Thus, we have shown that any holomorphic map from any smooth affine curve to $X$ algebraizes. It now follows from  Theorem \ref{thm:test_on_curves} that the finite type separated scheme $X$ is Borel hyperbolic.
  \end{proof}
 
 \begin{proof}[Proof of Theorem \ref{et}]
 Let $X\to Y$ be a finite \'etale morphism of finite type separated schemes over $\mathbb{C}$. If $X$ is Borel hyperbolic, then $Y$ is Borel hyperbolic (Lemma \ref{Lemma:Et_for_Borel} ). If $Y$ is Borel hyperbolic, then $X$ is Borel hyperbolic (Theorem \ref{thm2}).
 \end{proof}
 
 \begin{proof}[Proof of Theorem  \ref{thm:test_on_curves}]
 This is Theorem \ref{thm:test_on_curves_in_body}.
 \end{proof}

\bibliography{refs_borel}{}
\bibliographystyle{plain}

\end{document}